\newtheorem{Theorem}{Theorem}[section]
\newtheorem{Proposition}[Theorem]{Proposition}
\newtheorem{Lemma}[Theorem]{Lemma}
\newtheorem{Corollary}[Theorem]{Corollary}
\theoremstyle{definition}
\newtheorem{Definition}[Theorem]{Definition}
\newtheorem{Remark}[Theorem]{Remark}
\newcommand{\bTheorem}[1]{
	\begin{Theorem} \label{T#1} }
	\newcommand{\eT}{\end{Theorem}}
\newcommand{\bProposition}[1]{
	\begin{Proposition} \label{P#1}}
	\newcommand{\eP}{\end{Proposition}}
\newcommand{\bLemma}[1]{
	\begin{Lemma} \label{L#1} }
	\newcommand{\eL}{\end{Lemma}}
\newcommand{\bCorollary}[1]{
	\begin{Corollary} \label{C#1} }
	\newcommand{\eC}{\end{Corollary}}
\newcommand{\bRemark}[1]{
	\begin{Remark} \label{R#1} }
	\newcommand{\eR}{\end{Remark}}
\newcommand{\bDefinition}[1]{
	\begin{Definition} \label{D#1} }
	\newcommand{\eD}{\end{Definition}}
\newcommand{\Del}{\Delta_x}
\newcommand{\Ds}{\mathbb{D}_x}
\newcommand{\bFormula}[1]{
	\begin{equation} \label{#1}}
	\newcommand{\eF}{\end{equation}}
\newcommand{\vr}{\varrho}
\newcommand{\vt}{\vartheta}
\newcommand{\vu}{\vc{u}}
\newcommand{\vc}[1]{{\bf #1}}
\newcommand{\Div}{{\rm div}_x}
\newcommand{\Grad}{\nabla_x}
\newcommand{\dx}{\,{\rm d} {x}}
\newcommand{\dt}{\,{\rm d} t }
\newcommand{\vU}{\vc{U}}
\newcommand{\D}{{\rm d}}
\newcommand{\expe}[1]{ \mathbb{E} \left[ #1 \right] }
\newcommand{\br}{ \nonumber \\ }
\def\softd{{\leavevmode\setbox1=\hbox{d}%
		\hbox to 1.05\wd1{d\kern-0.4ex{\char039}\hss}}}
\definecolor{Cgrey}{rgb}{0.85,0.85,0.85}
\definecolor{Cblue}{rgb}{0.50,0.85,0.85}
\definecolor{Cred}{rgb}{1,0,0}
\definecolor{fancy}{rgb}{0.10,0.85,0.10}
\newcommand\Cbox[2]{%
	\newbox\contentbox%
	\newbox\bkgdbox%
	\setbox\contentbox\hbox to \hsize{%
		\vtop{
			\kern\columnsep
			\hbox to \hsize{%
				\kern\columnsep%
				\advance\hsize by -2\columnsep%
				\setlength{\textwidth}{\hsize}%
				\vbox{
					\parskip=\baselineskip
					\parindent=0bp
					#2
				}%
				\kern\columnsep%
			}%
			\kern\columnsep%
		}%
	}%
	\setbox\bkgdbox\vbox{
		\color{#1}
		\hrule width  \wd\contentbox %
		height \ht\contentbox %
		depth  \dp\contentbox
		\color{black}
	}%
	\wd\bkgdbox=0bp%
	\vbox{\hbox to \hsize{\box\bkgdbox\box\contentbox}}%
	\vskip\baselineskip%
}
\begin{document}


\title{Statistical solutions for the Navier--Stokes--Fourier system}

\author{Eduard Feireisl
	\thanks{The work of E.F. was partially supported by the
		Czech Sciences Foundation (GA\v CR), Grant Agreement
		21--02411S. The Institute of Mathematics of the Academy of Sciences of
		the Czech Republic is supported by RVO:67985840. \newline
		\hspace*{1em} $^\spadesuit$
		M.L. has been funded by the Deutsche Forschungsgemeinschaft (DFG, German Research Foundation) - Project number 233630050 - TRR 146. She is grateful to the Gutenberg Research College
		and Mainz Institute of Multiscale Modelling for supporting her research.
	} \and  M\'aria Luk\'a\v{c}ov\'a-Medvi\softd ov\'a$^{\spadesuit}$
}

\date{\today}

\maketitle

\bigskip

\centerline{$^*$  Institute of Mathematics of the Academy of Sciences of the Czech Republic}

\centerline{\v Zitn\' a 25, CZ-115 67 Praha 1, Czech Republic}

\medskip

\centerline{$^\spadesuit$ Institute of Mathematics, Johannes Gutenberg--University Mainz}

\centerline{Staudingerweg 9, 55 128 Mainz, Germany}

\begin{abstract}
	
	We show a general stability result in the framework of strong solutions of the Navier--Stokes--Fourier system describing the motion of a compressible viscous and heat conducting gas. As a corollary, we develop a concept
	of statistical solution in the class of regular solutions ``beyond  the blow up time''.

\end{abstract}


{\bf Keywords}: Navier--Stokes--Fourier system, regular solution, stability, statistical solution

\tableofcontents

\section{Introduction}
\label{i}

The goal of the present paper is to develop a mathematical framework for studying the motion of a general compressible, viscous and heat conducting fluid with random (uncertain) data. Basically all well established models
in continuum fluid mechanics, notably the Navier--Stokes system, are known to be well posed in the natural framework of smooth or at least sufficiently regular solutions. Unfortunately, the solutions may develop a finite time blow up for certain data, see Merle et al. \cite{MeRaRoSz}, \cite{MeRaRoSzbis}, whereas the problem is expected to be ill posed in the class of generalized (weak) solutions, cf. Buckmaster and Vicol \cite{BucVic}
or Albritton et al. \cite{AlbBruCol}.

Unique dependence of solutions on the data seems mandatory in problems with random parameters. If at least existence of some kind of generalized solution is known, uniqueness can be restored by means a suitable \emph{selection} process, see \cite{FanFei}. In this case, the associated statistical solutions can be identified
with a Markov semigroup of operators defined on a suitable space of probability distributions. For a weaker
concept of statistical solution based on the weak solutions, we refer to Constantin and Wu \cite{ConWu}, Foias \cite{Foias}, Foias, Rosa, and Temam \cite{FoRoTe3}, \cite{FoRoTe1}, Vishik and Fursikov \cite{VisFur}.
For compressible Euler system statistical solutions were identified with a hierarchie of correlation measures by Fjordholm et al. \cite{FJ_statistic}.
Unfortunately, for certain class of problems, the existence of a suitable generalized solution is still open. In particular, solutions may loose regularity or even become unbounded in a finite time lap, which excludes any ``natural'' continuation.

Statistical solutions are inevitable in the analysis of uncertainty quantification methods, such as the collocation, Monte Carlo or stochastic Galerkin methods. These methods are routinely used in engineering, physics, meteorology or medicine to quantify the uncertainty of data.
We refer to monographs and review papers by Abgrall and Mishra \cite{AM17}, Le Ma{\^\i}tre and Knio \cite{LK10}, Mishra and Schwab \cite{Schwab}, Nordstr\"om \cite{Nordstroem2015}, Xiu \cite{XD}. Rigorous convergence analysis of these uncertainty quantification methods
leans on uniqueness
and, ideally, continuous dependence of solutions on random parameters and was done only for simplified model problems.  For complex system, such as the compressible Navier--Stokes--Fourier equations rigourous
analysis of uncertainty quantification method is not available. Our aim in this paper is to fill this gap and provide a rigorous solution concept for uncertainty quantification analysis.

The \emph{Navier--Stokes--Fourier} (NSF) \emph{system} describes the time
evolution of the mass density $\vr = \vr(t,x)$, the (absolute) temperature $\vt = \vt(t,x)$, and velocity
$\vu = \vu(t,x)$ of a general Newtonian compressible heat conducting fluid:

\begin{mdframed}[style=MyFrame]

	\begin{itemize}
		\item{ {\bf Mass conservation, equation of continuity:}}
\begin{equation}		
		\partial_t \vr + \Div (\vr \vu) = 0. \label{nsf1}
		\end{equation}
	
	\item{{\bf Momentum balance, Newton's second law:}}
	
	 \begin{equation}
		\partial_t (\vr \vu) + \Div (\vr \vu \otimes \vu) + \Grad p(\vr, \vt) =
		\Div \mathbb{S}(\Ds \vu) + \vr \vc{g}. \label{nsf2}
		\end{equation}
	
	\item{{\bf Internal energy balance, First law of thermodynamics:}}
	
	\begin{equation}
		 \partial_t (\vr e(\vr, \vt)) + \Div (\vr e(\vr, \vt) \vu) + \Div \vc{q} = \mathbb{S}(\Ds \vu): \Ds \vu - p(\vr, \vt) \Div \vu + \vr Q.
		\label{nsf3}
		\end{equation}
			\end{itemize}

	\end{mdframed}

\noindent
For viscous and heat conducting gases, the problem may be formally closed by the following \emph{constitutive
relations}:

\begin{mdframed}[style=MyFrame]

	\begin{itemize}
		
		\item {\bf Equations of state, Boyle--Mariotte law:}
	
	\begin{equation}
		p(\vr, \vt) = \vr \vt,\ e(\vr, \vt) = c_v \vt . \label{nsf4}
		\end{equation}
	
	\item {\bf Newton's rheological law:}
	
	\begin{equation}
		 	\mathbb{S}(\Ds \vu) = \mu \left(\Grad \vu + \Grad \vu^t - \frac{2}{3} \Div \vu \mathbb{I} \right) +
		\eta \Div \vu \mathbb{I}, \  \   \mu > 0, \eta \geq 0, \label{nsf5}
		\end{equation}
	
	\item {\bf Fourier's law:}
	
	\begin{equation}
		\vc{q} = - \kappa \Grad \vt, \ \kappa  > 0. \label{nsf6}
		\end{equation}
		
	\end{itemize}

	\end{mdframed}

\noindent The initial state of the system at the reference time $t = 0$ is given through
\emph{initial data}:

\begin{mdframed}[style=MyFrame]

\begin{equation} \label{nsf7}
	\vr(0,\cdot) = \vr_0,\ \vu(0, \cdot) = \vu_0,\ \vt(0, \cdot) = \vt_0.
	\end{equation}
	
	\end{mdframed}

\noindent For simplicity, we assume that all quantities are spatially periodic, meaning the fluid domain can be identified with the flat torus
\begin{equation} \label{BC}
	\mathbb{T}^3 = \left( \left[-1,1 \right] \Big|_{\{ -1;1 \}} \right)^3.
	\end{equation}

The quantity
\[
D = \Big( \vr_0, \vu_0, \vt_0; \vc{g}, Q; c_v, \mu, \eta, \kappa \Big)
\]
represents the \emph{data} of the problem. Ideally, for any \emph{physically admissible} data and the time
$t > 0$, the NSF system should admit a unique solution $[\vr(t, \cdot), \vt(t, \cdot), \vu(t, \cdot)]$.
By physically \emph{admissible} we mean the data satisfying
\begin{equation} \label{admis}
\inf \vr_0 > 0,\ \inf \vt_0 > 0, \ c_v > 1,\ \mu > 0,\ \eta \geq 0, \ \kappa > 0.
\end{equation}
Hereafter we always tacitly assume that the data we deal with are admissible.

\subsection{Local existence of regular solutions}
\label{LE}

Probably the
``optimal'' existence result in the class of strong solutions was established by
Cho and Kim \cite[Theorem 1]{ChoKim1}.

\begin{Theorem}[{\bf Local existence}] \label{LET1}
	Let $3 < q \leq 6$ be given. Let
	\begin{equation} \label{LE1}
		\vr_0 \in W^{1,q}(\mathbb{T}^3),\ \vr_0 > 0,\
		\vt_0 \in W^{2,2}(\mathbb{T}^3),\ \vt_0 > 0, \
		\vu_0 \in W^{2,2}(\mathbb{T}^3; R^3),
		\end{equation}
and
\begin{align}
	\vc{g} &\in BC([0,\infty); L^2(\mathbb{T}^3; R^3))
	\cap L^2(0, \infty; L^q(\mathbb{T}^3; R^3)),\
	\partial_t \vc{g} \in L^2(0, \infty; W^{-1,2}(\mathbb{T}^3; R^3)), \br
	Q &\in BC([0,\infty); L^2(\mathbb{T}^3))
	\cap L^2(0, \infty; L^q(\mathbb{T}^3)),\
	\partial_t Q \in L^2(0, \infty; W^{-1,2}(\mathbb{T}^3)), \ Q \geq 0.
	\label{LE2}
	\end{align}

Then there exists $T > 0$ and a strong solution of the Navier--Stokes--Fourier system \eqref{nsf1}--\eqref{nsf7}
unique in the class	
\begin{align}
	\vr &\in C([0,T]; W^{1,q}(\mathbb{T}^3)),\ \partial_t \vr \in C([0,T]; L^q(\mathbb{T}^3)), \ \vr > 0, \label{LE3} \\
		\vt &\in C([0,T]; W^{2,2}(\mathbb{T}^3)) \cap L^2(0,T; W^{2,q}(\mathbb{T}^3)),\ \vt > 0, \br
		\partial_t \vt &\in L^\infty(0,T; L^2(\mathbb{T}^3)) \cap L^2(0,T; W^{1,2}(\mathbb{T}^3)), \label{LE4}\\
		\vu &\in C([0,T]; W^{2,2}(\mathbb{T}^3; R^3)) \cap L^2(0,T; W^{2,q}(\mathbb{T}^3;R^3)), \br
			\partial_t \vu &\in L^\infty(0,T; L^2(\mathbb{T}^3; R^3)) \cap L^2(0,T; W^{1,2}(\mathbb{T}^3; R^3)). \label{LE5}	
		\end{align}

	\end{Theorem}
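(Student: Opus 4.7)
The plan is to construct the solution by a Picard-style iteration that decouples the hyperbolic transport equation for the density from the parabolic equations for velocity and temperature. As a first step I would rewrite \eqref{nsf1}--\eqref{nsf3} in non-conservative form. The continuity equation becomes the transport equation $\partial_t \vr + \vu \cdot \Grad \vr + \vr \Div \vu = 0$, while, using \eqref{nsf4}--\eqref{nsf6} and dividing by the (strictly positive) density, the momentum and internal energy balances take the form
\[
\partial_t \vu - \frac{1}{\vr}\Div \S(\Ds \vu) = - \vu \cdot \Grad \vu - \frac{1}{\vr}\Grad(\vr\vt) + \vc{g},
\]
\[
c_v \partial_t \vt - \frac{\kappa}{\vr}\Delta \vt = - c_v \vu \cdot \Grad \vt - \vt \Div \vu + \frac{1}{\vr}\S(\Ds \vu):\Ds \vu + Q,
\]
which are uniformly parabolic as long as the density stays bounded above and below.

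I would then set up the iteration starting from $(\vr^0,\vu^0,\vt^0)\equiv(\vr_0,\vu_0,\vt_0)$ extended constantly in time: given the $n$-th iterate, first solve the linear transport equation for $\vr^{n+1}$ with drift $\vu^n$, then solve the two decoupled linear parabolic problems for $\vu^{n+1}$ and $\vt^{n+1}$ with coefficients built from $\vr^{n+1}$, $\vu^n$, $\vt^n$. Positivity is preserved at each step: $\vr^{n+1}>0$ by the method of characteristics, since $\Div \vu^n \in L^1(0,T;L^\infty)$ thanks to the embedding $W^{2,q}\hookrightarrow W^{1,\infty}$ for $q>3$; and $\vt^{n+1}>0$ follows from the parabolic maximum principle together with $Q\geq 0$.

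The a priori estimates split into two families. For the density, differentiating the transport equation and testing with $|\Grad \vr^{n+1}|^{q-2}\Grad \vr^{n+1}$ yields a Gronwall-type bound on $\|\vr^{n+1}(t)\|_{W^{1,q}}$ in terms of $\|\Grad \vu^n\|_{L^1(0,T;L^\infty)}$ and $\|\Grad^2 \vu^n\|_{L^1(0,T;L^q)}$. For the velocity and temperature, maximal $L^p$-regularity for parabolic equations with variable but Lipschitz leading coefficient $1/\vr^{n+1}$ produces bounds in the spaces \eqref{LE4}--\eqref{LE5}; the time regularity $\partial_t \vu,\partial_t\vt \in L^\infty(0,T;L^2)$ is inherited at $t=0$ from the implicit compatibility conditions encoded in \eqref{LE1}--\eqref{LE2}. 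By choosing $T>0$ sufficiently small and absorbing lower-order terms, these bounds can be closed so that the entire sequence of iterates lies in a fixed ball of the function spaces of the theorem.

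Convergence of the iterates is obtained by showing that $(\vr^{n+1}-\vr^n,\vu^{n+1}-\vu^n,\vt^{n+1}-\vt^n)$ is Cauchy in the weaker norm $C([0,T];L^2)$ for $T$ small enough: subtracting the equations satisfied at level $n+1$ and $n$ gives linear equations for the differences whose coefficients and source terms are controlled by the uniform high-regularity bounds of the previous step. Interpolating the $L^2$ convergence with those uniform bounds produces enough strong convergence to pass to the limit in every nonlinear term and recover the original system \eqref{nsf1}--\eqref{nsf7}. Uniqueness in the stated class follows from an analogous energy estimate applied to the difference of two solutions. The main obstacle throughout is the intrinsic regularity mismatch between $\vr$ and $(\vu,\vt)$: the transport equation gains no smoothness, so $\|\Grad \vr\|_{L^q}$ is controlled only by an exponential of $\int_0^t\|\Grad \vu\|_{L^\infty}$, forcing $q>3$; conversely, the parabolic estimates require $1/\vr$ to be essentially Lipschitz, which again forces $q>3$. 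Closing this loop in a self-consistent way is what dictates the delicate choice of function spaces and is the most technical part of the proof.
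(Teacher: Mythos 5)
The paper does not prove this theorem at all: it is imported verbatim from Cho and Kim \cite[Theorem 1]{ChoKim1}, so there is no internal proof to compare against. Your outline is, in substance, the standard argument used there --- linearize, iterate (transport for $\vr$ with the previous velocity, then the two parabolic problems for $\vu$ and $\vt$), close uniform high-norm bounds on a short time interval, and contract in a weaker norm such as $C([0,T];L^{2})$ --- and I see no structural gap in it for the non-vacuum setting of \eqref{admis}. Two small caveats: Cho and Kim actually treat the harder case of non-negative $\vr_0$, $\vt_0$, where explicit compatibility conditions must be imposed rather than being ``implicit'' in \eqref{LE1}--\eqref{LE2} (in the present strictly positive setting they are indeed automatic, as Remark \ref{RLE1} indicates); and $\vr\in W^{1,q}(\mathbb{T}^3)$ with $3<q\leq 6$ gives $1/\vr$ only H\"older continuity of exponent $1-3/q$, not Lipschitz regularity, so the parabolic step should invoke maximal regularity for continuous (or VMO) coefficients, or the time-differentiated energy estimates of \cite{ChoKim1}, rather than Lipschitz-coefficient theory. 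Neither point invalidates the strategy.
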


\begin{Remark} \label{RLE1}
	
	A brief inspection of the existence proof in \cite{ChoKim1} reveals that the
	length of the existence interval $T_{\rm max}$ can be bounded below by a positive constant depending solely on the
	norm of the initial data and forcing terms in the function spaces specified in \eqref{LE1} and \eqref{LE2} as well as on $\inf_{\mathbb{T}^3} \vr_0 > 0$.
	In particular, $T_{\rm max}$ is  independent of $\inf \vt_0$. As a matter of fact, the local existence established in \cite{ChoKim1} holds for non--negative $\vr_0$, $\vt_0$ under suitable compatibility
	conditions.

	\end{Remark}

We define the life span -- maximal existence time $T_{\rm max}$ -- of the local solution,
\begin{equation} \label{LE6}
0< 	T_{\rm max} = \sup \left\{ T > 0 \ \Big|\ \mbox{regular solution}\  (\vr, \vt, \vu) \ \mbox{exists in}\ [0,T] \right\} \leq \infty.
	\end{equation}
In accordance with Remark \ref{RLE1},
\begin{align} \label{LE7}
	T_{\rm max} &< \infty \br &\Rightarrow \br
	\lim_{t \to T_{\rm max} -} &\left( \| \vr(t, \cdot) \|_{W^{1,q}(\mathbb{T}^3)} + \| \vt(t, \cdot) \|_{W^{2,2}(\mathbb{T}^3)} + \| \vu(t, \cdot) \|_{W^{2,2}(\mathbb{T}^3; R^3)} + \| \vr(t, \cdot)^{-1} \|_{C(\mathbb{T}^3)} \right)
 = \infty .
	\end{align}
Note that $	\| \vr^{-1} \|_{C(\mathbb{T}^3)} = \inf_{\mathbb{T}^3} \vr ^{-1} .$

\subsection{Conditional regularity}

It is still open whether $T_{\rm max}$ in Theorem \ref{LET1} may be finite for some data. If this is the case, however, the density and/or temperature must become unbounded at the same time. We report the following \emph{conditional regularity} criterion, see
\cite[Theorem 1.4]{FeWeZh}:

	\begin{Theorem}[{\bf Conditional regularity criterion}] \label{BCT1}
		
		Let the data
		\[
		c_v > 1,\ \mu > 0,\ \eta \geq 0, \kappa > 0
		\]
		be given.
		Under the hypotheses of Theorem \ref{LET1}, let $[\vr, \vt, \vu]$ be the local strong solution of the NSF system defined on $[0, T_{\rm max})$.
		
		Then there exists a function $\Lambda$, bounded for bounded values of its argument, such that
		\begin{align} \label{BC1}
			\sup_{t \in [0,T]} &\| \vr(t, \cdot) \|_{W^{1,q}(\mathbb{T}^3)}	+
			\sup_{t \in [0,T]} \| \vt(t, \cdot) \|_{W^{2,2}(\mathbb{T}^3)} +
			\sup_{t \in [0,T]} \| \vu(t, \cdot) \|_{W^{2,2}(\mathbb{T}^3; R^3)} \br
			&+ \int_0^T \|  \vt \|_{W^{2,q}(\mathbb{T}^3)}^2 \dt + \int_0^T \|  \vu \|_{W^{2,q}(\mathbb{T}^3;R^3)}^2 \dt \br
			&+ \sup_{t \in [0,T]} \| \partial_t \vr \|_{L^q(\mathbb{T}^3; R^3)} + \sup_{t \in [0,T]} \| \partial_t \vt \|_{L^2(\mathbb{T}^3)}   + \sup_{t \in [0,T]} \| \partial_t \vu \|_{L^2(\mathbb{T}^3;R^3)}\br
			&+ \int_0^T \| \partial_t \vt \|_{W^{1,2}(\mathbb{T}^3)}^2 \dt + \int_0^T \| \partial_t \vu \|_{W^{1,2}(\mathbb{T}^3; R^3)}^2 \dt \br
			&\leq \Lambda \Big( T , \| \vr_0 \|_{W^{1,q}(\mathbb{T}^3)} , \| \vt_0 \|_{W^{2,2}(\mathbb{T}^3)}
			, \| \vu_0 \|_{W^{2,2}(\mathbb{T}^3; R^3)}, \| \vr_0^{-1} \|_{C(\mathbb{T}^3)},
				\br
			&\quad \sup_{t \in [0,T]} \| \vc{g}(t, \cdot) \|_{L^2(\mathbb{T}^3; R^3 )}, \sup_{t \in [0,T]} \| Q(t, \cdot) \|_{L^2(\mathbb{T}^3 )}, \int_0^T \| \vc{g} \|^2_{L^q(\mathbb{T}^3; R^3))} \dt,
			\int_0^T \| Q \|^2_{L^q(\mathbb{T}^3))} \dt,  \br
			&\quad  \left. \int_0^T \| \partial_t \vc{g} \|^2_{W^{-1,2}(\mathbb{T}^3; R^3))} \dt,
			\int_0^T \| \partial_t Q \|^2_{W^{-1,2}(\mathbb{T}^3))} \dt,\ \sup_{(0,T) \times \mathbb{T}^3} \vr ,
			\sup_{(0,T) \times \mathbb{T}^3} \vt, c_v, \mu, \mu^{-1}, \eta, \kappa, \kappa^{-1} \right)
		\end{align}
		for any $0 < T < T_{\rm max}$.	
	\end{Theorem}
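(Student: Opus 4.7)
The plan is to derive the cascade of estimates in a bootstrap fashion, treating the uniform $L^\infty$ bounds $\sup \vr \leq \bar\vr$ and $\sup\vt \leq \bar\vt$, which appear explicitly on the right--hand side of \eqref{BC1}, as the only a priori information beyond the data norms. Structurally, \eqref{nsf1} is a transport equation for $\vr$, whereas, once $\vr$ is bounded above and away from zero, both \eqref{nsf2} and \eqref{nsf3} become uniformly parabolic equations for $\vu$ and $\vt$ respectively. The overall strategy is therefore to alternate between transport estimates for $\vr$ and maximal parabolic regularity for $\vu$ and $\vt$, closing the loop through a Gronwall argument.

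First I would establish the basic energy balance by testing \eqref{nsf2} with $\vu$ and adding \eqref{nsf3}, which gives $\sqrt{\vr}\,\vu\in L^\infty(0,T;L^2)$, $\Grad\vu\in L^2((0,T)\times\mathbb{T}^3)$ and $\vr e\in L^\infty(0,T;L^1)$; combined with $\vt\leq\bar\vt$ and the formal entropy inequality, this also yields $\Grad\vt\in L^2$. Next I would secure a strictly positive lower bound $\vr\geq\underline{\vr}(T)>0$. Writing \eqref{nsf1} along the Lagrangian flow $\dot X(t)=\vu(X(t),t)$ as $\tfrac{\rm d}{{\rm d}t}\log\vr(X(t),t)=-\Div\vu$, this reduces to controlling $\Div\vu\in L^1(0,T;L^\infty)$. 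Such a bound follows by testing the momentum equation with $\partial_t\vu$ to obtain $\partial_t\vu\in L^\infty(0,T;L^2)\cap L^2(0,T;W^{1,2})$, then applying elliptic regularity to the Lam\'e operator with right--hand side $\vr\partial_t\vu+\vr\vu\cdot\Grad\vu+\Grad p-\vr\vc{g}$ to recover $\vu\in L^\infty(0,T;W^{2,2})\cap L^2(0,T;W^{2,q})$, and invoking the Sobolev embedding $W^{1,q}\hookrightarrow L^\infty$ for $q>3$.

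With the two--sided bound on $\vr$ in hand, I would apply parabolic maximal regularity to the internal energy equation rewritten as
\[
c_v\vr\partial_t\vt-\kappa\Delta\vt = -c_v\vr\vu\cdot\Grad\vt+\mathbb{S}(\Ds\vu):\Ds\vu-\vr\vt\Div\vu+\vr Q,
\]
so as to extract the $L^2(0,T;W^{2,q})$ bound on $\vt$ and the $L^2(0,T;W^{1,2})$ bound on $\partial_t\vt$; here $\bar\vt$ is essential to estimate the pressure--work term $\vr\vt\Div\vu$. For the missing $W^{1,q}$ bound on $\vr$, differentiating \eqref{nsf1} in $x$ produces a transport equation for $\Grad\vr$ forced by $\vr$ times second derivatives of $\vu$; a Gronwall argument together with a logarithmic Sobolev estimate of Beale--Kato--Majda type controlling $\|\Grad\vu\|_{L^1(0,T;L^\infty)}$ by $\|\vu\|_{L^2(0,T;W^{2,q})}$ then closes the loop. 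The remaining time--derivative bounds on $\partial_t\vr$, $\partial_t\vt$ and $\partial_t\vu$ are read off directly from the equations.

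The principal obstacle is the circular coupling: controlling $\vu$ in $W^{2,q}$ through \eqref{nsf2} requires an estimate on $\Grad p=\vt\Grad\vr+\vr\Grad\vt$ in $L^q$, and hence on $\Grad\vr$ in $L^\infty(0,T;L^q)$, whereas the transport estimate for $\Grad\vr$ in turn demands $\Grad\vu\in L^1(0,T;L^\infty)$, itself a norm of the unknown $\vu$. Unwinding this loop forces a simultaneous Gronwall estimate in which all higher norms of $(\vr,\vt,\vu)$ appear on both sides, with $\bar\vr$ and $\bar\vt$ entering as multiplicative constants that keep the Gronwall factor finite on $[0,T]\subset[0,T_{\rm max})$. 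This is the delicate step, and it is precisely where the $L^\infty$ control of both $\vr$ and $\vt$, rather than of $\vr$ alone, is indispensable.
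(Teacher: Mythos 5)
First, a point of reference: the paper does not prove Theorem \ref{BCT1} at all. It is imported verbatim from \cite[Theorem 1.4]{FeWeZh} (Feireisl--Wen--Zhu), so there is no in-paper argument to compare yours against; your sketch has to be judged as a reconstruction of that external proof. As such, your overall architecture is the standard one (energy balance, lower bound on $\vr$ via $\Div \vu \in L^1(0,T;L^\infty(\mathbb{T}^3))$, parabolic maximal regularity for $\vt$ and $\vu$, transport estimate for $\Grad \vr$, Gronwall), and you correctly identify where the assumed $L^\infty$ bounds on $\vr$ \emph{and} $\vt$ must enter. But the proposal stops exactly at the point where the actual work lies: you name the circular coupling between $\|\Grad \vr\|_{L^\infty_t L^q_x}$ and $\|\Grad \vu\|_{L^1_t L^\infty_x}$ and then assert that ``a simultaneous Gronwall estimate'' closes it. As written, that loop does not close: the transport estimate gives $\|\Grad\vr(t)\|_{L^q} \leq \|\Grad\vr_0\|_{L^q}\exp\bigl(C\int_0^t\|\Grad\vu\|_{L^\infty}\,{\rm d}s\bigr)$ plus source terms, while your route to $\Grad\vu \in L^1_tL^\infty_x$ (elliptic regularity for the Lam\'e operator with right-hand side containing $\Grad p = \vt\Grad\vr + \vr\Grad\vt$) feeds $\|\Grad\vr\|_{L^q}$ back in \emph{linearly inside the exponential}, which a plain Gronwall argument cannot absorb.

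The missing device is the effective viscous flux (together with the vorticity) and Hoff-type estimates on the material derivative. Setting $G = \bigl(\tfrac{4}{3}\mu + \eta\bigr)\Div\vu - p(\vr,\vt)$, one has $\Delta_x G = \Div(\vr\dot\vu - \vr\vc{g})$ with $\dot\vu = \partial_t\vu + \vu\cdot\Grad\vu$, so $\|\Grad G\|_{L^q}$ is controlled by $\|\vr\dot\vu\|_{L^q}$ with \emph{no derivative of $\vr$ or $\vt$ appearing}; since $p = \vr\vt$ is bounded precisely by the hypothesized $\sup\vr\cdot\sup\vt$, this yields $\|\Div\vu\|_{L^\infty} \lesssim \|G\|_{L^\infty} + \sup\vr\,\sup\vt$, and a logarithmic (Beale--Kato--Majda type) interpolation then controls $\|\Grad\vu\|_{L^\infty}$ by $\|\Div\vu\|_{L^\infty} + \|{\rm curl}_x\vu\|_{L^\infty}$ times $\log(e + \|\Grad^2\vu\|_{L^q})$, which turns the exponential Gronwall into a double-logarithmic one that does close. (Your invocation of BKM to bound $\|\Grad\vu\|_{L^1_tL^\infty_x}$ by $\|\vu\|_{L^2_tW^{2,q}_x}$ is redundant --- for $q>3$ that is a plain Sobolev embedding --- and misses the actual role of the log estimate.) Two further corrections: testing the momentum equation with $\partial_t\vu$ gives $\partial_t\vu \in L^2_tL^2_x$ and $\Grad\vu \in L^\infty_tL^2_x$, not $\partial_t\vu \in L^\infty_tL^2_x$ (for the latter one must differentiate the equation in time, or work with $\dot\vu$, and the term $\int\Grad p\cdot\partial_t\vu$ is exactly why the material derivative is preferred); and because $\Grad p$ contains $\Grad\vt$, the $\dot\vu$-estimates cannot be run before the temperature estimates --- they must be performed \emph{simultaneously} with the corresponding estimates for $\partial_t\vt$, which is the main structural difference between the heat-conducting case and the barotropic one and is only implicitly acknowledged in your last paragraph.
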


Besides the given data, the function $\Lambda$ depends only on the uniform norm of $\vr$ and $\vt$.
The conclusion of Theorem \ref{BCT1} can be therefore equivalently formulated as
\begin{equation} \label{BC2}
	\limsup_{t \to T_{\rm max}-} \ \sup_{x \in \mathbb{T}^3} \Big( \vr(t, x) + \vt(t, x) \Big) = \infty.
	\end{equation}

\subsection{Data dependence and statistical solutions}

In the absence of global--in--time existence results in the class of regular solutions, the weak solutions may represent a suitable alternative,
Unfortunately, the existence theory developed in \cite{FeiNovOpen} is restricted to
a rather specific class of constitutive relations that does not cover \eqref{nsf4}--\eqref{nsf6}. What is more,
the available {\it a priori} bounds are not sufficient to use the framework of the more general measure--valued solutions in the spirit of \cite{BreFeiNov}. In particular, there is no guarantee the solution $(\vr, \vt, \vu)$
remains continuous up to the blow up time $T_{\rm max}$ even in a weak sense. Indeed the only available
{\it a priori} bounds result from the physical principles of \emph{mass conservation}
\begin{equation} \label{conM}
	\int_{\mathbb{T}^3} \vr (t, \cdot) \ \dx = \int_{\mathbb{T}^3} \vr_0 (t, \cdot) \ \dx,
	\end{equation}
\emph{energy conservation} (if $\vc{g}= 0,\, Q = 0$)
\begin{equation} \label{conE}
	\int_{\mathbb{T}^3} \left( \frac{1}{2} \vr |\vu|^2 + c_v \vr \vt \right) (t, \cdot) \ \dx =
	\int_{\mathbb{T}^3} \left( \frac{1}{2} \vr_0 |\vu_0|^2 + c_v \vr_0 \vt_0 \right)  \ \dx,	
\end{equation}
and \emph{entropy production}
\begin{equation} \label{entrP}
	\int_{\mathbb{T}^3} \vr \log \left( \frac{\vt^{c_v}}{\vr} \right) (t, \cdot) \dx \geq
	\int_{\mathbb{T}^3} \vr_0 \log \left( \frac{\vt^{c_v}_0}{\vr_0} \right) \dx +
	\int_0^t \int_{\mathbb{T}^3} \frac{1}{\vt} \left( \mathbb{S} (\Ds \vu) : \Ds \vu + \kappa \frac{ |\Grad \vt |^2}{\vt}        \right) \dx \ \D s  .		
	\end{equation}

Our plan is to develop the theory for random data, notably the concept of \emph{statistical} solution, using the framework of
regular solutions. In view of the blow up criterion \eqref{BC2}, this approach suits well to problems related
to convergence of numerical approximations, where \eqref{BC2} is very often taken for granted. In addition,
we may also ``anticipate'' the idea that solutions violating \eqref{BC2} and thus blowing up in a finite time
are statistically insignificant and can be ignored in large samples of data.

The theory is based on a stability result that may be of independent interest: The life span $T_{\rm max}$ is a lower semi--continuous function of with respect to a suitable topology of the data space, see Section \ref{TM}. Given a regular solution of the NSF system, any other solution corresponding to a sufficiently close data remains regular at least on the same time interval. Similar results in the context of barotropic fluids were obtained by Bae and Zajaczkowski
\cite{BaeZaj}, see also
\cite{BeFeJiNo}.
Once stability is established, the statistical solutions are obtained via the push forward measure argument, see Section \ref{SE}. In order to accommodate the possible blow up of solutions, we modify the original topology of the phase space
in the spirit of the celebrated Jakubowski Theorem \cite{Jakub}. Finally, in Section~\ref{MC} we illustrate the theory showing convergence of the Monte-Carlo method.

\section{Stability}
\label{TM}

We denote $(\vr, \vt, \vu)[D]$ the strong solution of the NSF system corresponding to the data
\[
D = \left( \vr_0, \vt_0, \vu_0; \vc{g}, Q; c_v, \mu, \eta, \kappa \right).
\]
We introduce the data space $D_{X,F,P}$,
\[
D_{X,F,P} = X \times F \times P,
\]
where $X \owns (\vr_0, \vt_0, \vu_0)$ is the \emph{phase space}, $F \owns (\vc{g}, Q) $ the space of
\emph{external forces}, and $P \owns (c_v, \mu, \eta, \kappa)$ the \emph{parameter space}.
Furthermore,
motivated by the local existence result stated in Theorem \ref{LET1}, the afore mentioned spaces are endowed with the following topologies:

\begin{itemize}
	\item
The
\emph{phase space}
\[
X =  W^{1,q}(\mathbb{T}^3) \times
W^{2,2}(\mathbb{T}^3) \times W^{2,2}(\mathbb{T}^3; R^3)
\]
-- a separable reflexive Banach space compactly embedded into the space of continuous functions
\[
 C(\mathbb{T}^3; R^5).
\]

\item The \emph{trajectory space}
\begin{align}
X_T &= C([0,T]; W^{1,q}(\mathbb{T}^3)) \cap C^1([0,T]; L^{q}(\mathbb{T}^3)) \br
&\times  C([0,T]; W^{2,2}(\mathbb{T}^3)) \cap L^2(0,T; W^{2,q}(\mathbb{T}^3))
\cap W^{1,\infty}(0,T; L^2 (\mathbb{T}^3)) \cap W^{1,2}(0,T; W^{1,2}(\mathbb{T}^3)) \br
&\times  C([0,T]; W^{2,2}(\mathbb{T}^3;R^3)) \cap L^2(0,T; W^{2,q}(\mathbb{T}^3;R^3)) \br
&\quad \quad \quad \quad \cap W^{1,\infty}(0,T; L^2 (\mathbb{T}^3;R^3)) \cap W^{1,2}(0,T; W^{1,2}(\mathbb{T}^3;R^3))
\nonumber
\end{align}
-- a separable Banach space compactly embedded into the space of continuous functions
\[
C([0,T] \times \mathbb{T}^3; R^5).
\]

\item The \emph{data space}
\[
D_{X,F,P} = X \times F \times P,
\]
where
\begin{align}
F &= \left[  BC[0, \infty; L^2 (\mathbb{T}^3; R^3) \cap L^2(0,\infty; L^q (\mathbb{T}^3; R^3) \cap
W^{1,2}(0, \infty; W^{-1,2}(\mathbb{T}^3; R^3) ) \right] \br
&\times
\left[  BC[0, \infty; L^2 (\mathbb{T}^3) \cap L^2(0,\infty; L^q (\mathbb{T}^3) \cap
W^{1,2}(0, \infty; W^{-1,2}(\mathbb{T}^3) ) \right], \nonumber
\end{align}
\[
P = R^4.
\]

\end{itemize}

\subsection{Stability in the basic topology}

In accordance with Theorem \ref{LET1}, the solution exists on a maximal time interval $[0, T_{\rm max}[D])$ as long as
the data $D$ are admissible and belong to $D_{X,F,P}$. Motivated by the regularity criterion established in Theorem \ref{BCT1}, we introduce  a ``stopping time''
\begin{equation} \label{TM1}
	{T}_M [D] = \sup_{\tau \in [0, T_{\rm max})} \left\{ \sup_{t \in [0,\tau], x \in \mathbb{T}^3} \Big( \vr(t, x) +
	\vt(t, x) \Big) < M \right\} = \inf_{\tau > 0} \sup_{x \in \mathbb{T}^3} \Big( \vr(\tau, x) +
	\vt(\tau, x) \Big) \geq M.
	\end{equation}
Obviously, ${T}_M[ D ]$ is a  non--decreasing functions of $M$, and, in view of Theorem \ref{BCT1} and \eqref{BC2},
\begin{equation} \label{TM2}
{T}_M[D] < T_{\rm max}[D],\ {T}_M [D] \nearrow T_{\rm max}[D] \ \mbox{as}\ M \to \infty.
\end{equation}
	
Now, suppose that $D_n \to D$ in $D_{X,F,P}$, where $D$ are admissible data, specifically,
\begin{align}
	\vr_{0,n} &\to \vr_0 \ \mbox{in} \ W^{1,q}(\mathbb{T}^3), \ \mbox{where}\  \vr_0 > 0, \br
	\vt_{0,n} &\to \vt_0 \ \mbox{in}\ W^{2,2}(\mathbb{T}^3), \ \vt_0 > 0 , \br
	\vu_{0,n} &\to \vu_0 \ \mbox{in}\ W^{2,2}(\mathbb{T}^3; R^3),
	\label{TM3}	
\end{align}
\begin{align}
	\vc{g}_n &\to \vc{g} \ \mbox{in}\ BC([0,\infty; L^2(\mathbb{T}^3; R^3)) \cap
	L^2(0,\infty; L^q(\mathbb{T}^3; R^3)), \ \partial_t \vc{g}_n \to \partial_t \vc{g}\
	\mbox{in}\ L^2(0,\infty; W^{-1,2}(\mathbb{T}^3; R^3)), \br
	Q_n &\to Q \ \mbox{in}\ BC([0,\infty); L^2(\mathbb{T}^3)) \cap
	L^2(0,\infty; L^q(\mathbb{T}^3)), \ \partial_t Q_n \to \partial_t Q\
	\mbox{in}\ L^2(0,\infty; W^{-1,2}(\mathbb{T}^3)),
	\label{TM4}
\end{align}
and
\begin{equation} \label{TM5}
	c^n_v \to c_v > 1,\ \mu_n \to \mu > 0,\ \lambda_n \to \lambda \geq 0,\ \kappa_n \to \kappa > 0.
	\end{equation}
In addition, assume that
\[
{T}_M[ D_n] \to \tau \ \mbox{and fix}\
0 \leq T^- \leq \tau < T^+.
\]
Finally, consider the sequence $(\vr_n, \vt_n, \vu_n) = (\vr, \vt, \vu)[D_n](t \wedge {T}_M[D_n], \cdot)$, specifically
$(\vr_n, \vt_n, \vu_n)(t, \cdot)$ is the strong solution of the NSF system if $t \leq {T}_M(D_n)$ and
$(\vr_n, \vt_n \vu_n)(t, \cdot) = (\vr_n, \vt_n \vu_n)({T}_M(D_n), \cdot)$ if $t > {T}_M[ D_n] $.

Now, it follows from the uniform bounds \eqref{BC1} and the standard compact embedding relations
for Sobolev spaces that
\[
( \vr_n, \vt_n, \vu_n )_{n=1}^\infty \ \mbox{if precompact in}\ BC([0, \infty) \times \mathbb{T}^3); R^5),
\]
and, by the same token
\[
\vr_n \to \vr,\ \vt_n \to \vt,\ \vu_n \to \vu
\]
in the weak-(*) topology of the trajectory space $X_{T^+}$. Here and hereafter, by the weak-(*) topology on the trajectory space $X_T$ we mean the weak star topology of the space
\begin{align}
	X_{T,w-(*)} &= L^\infty(0,T; W^{1,q}(\mathbb{T}^3)) \cap W^{1,\infty}(0,T; L^{q}(\mathbb{T}^3)) \br
	&\times  L^\infty(0,T; W^{2,2}(\mathbb{T}^3)) \cap L^2(0,T; W^{2,q}(\mathbb{T}^3))
	\cap W^{1,\infty}(0,T; L^2 (\mathbb{T}^3)) \cap W^{1,2}(0,T; W^{1,2}(\mathbb{T}^3)) \br
	&\times  L^\infty(0,T; W^{2,2}(\mathbb{T}^3;R^3)) \cap L^2(0,T; W^{2,q}(\mathbb{T}^3;R^3)) \br
	&\quad \quad \quad \quad \cap W^{1,\infty}(0,T; L^2 (\mathbb{T}^3;R^3)) \cap W^{1,2}(0,T; W^{1,2}(\mathbb{T}^3;R^3)).
	\nonumber
\end{align}
The following is easy to check:
\begin{itemize}
	\item The limit $(\vr, \vt, \vu)$ is the unique solution of the NSF system in $[0, T^-]$ corresponding to the limit data $D$.
	\item
	\[
	\sup_{x \in \mathbb{T}^3} \Big( \vr(t, \cdot) + \vt(t \cdot) \Big) = M \ \mbox{for any}\
	t \geq T^+.
	\]
	\end{itemize}
As $T^- \leq \tau \leq T^+$ are arbitrary, we conclude ${T}_M[D] \leq \tau$, in other words, the mapping
\begin{equation} \label{LSC}
D \mapsto {T}_M[D] \ \mbox{is lower semi--continuous}.
\end{equation}
Accordingly, $T_{\rm max}$, being a supremum of l.s.c. mappings, is lower semi--continuous function of $D$. We have shown the following result.

\begin{mdframed}[style=MyFrame]
	
	\begin{Theorem}[{\bf Stability}] \label{TST1}
		
	 For any sequence $D_n \to D$ in $D_{X,F,P}$, where $D$ are admissible data, we have
	 \begin{equation} \label{LSC1}
	 \liminf_{n \to \infty} T_{\rm max}[D_n] \geq T_{\rm max}[D],
	 \end{equation}
	 and
	 \begin{equation} \label{conver}
	 (\vr, \vt, \vu)[D_n] \to (\vr, \vt, \vu)[D] \ \mbox{weakly-(*) in}\ X_{T}
	 \end{equation}
	 for any $0 < T < T_{\rm max}[D]$.

		\end{Theorem}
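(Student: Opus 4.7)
The plan is to reduce Theorem \ref{TST1} to a lower semi--continuity statement for the stopping times $T_M$ defined in \eqref{TM1}, and to extract the uniform bounds needed for compactness directly from the conditional regularity bound \eqref{BC1}. The monotonicity in \eqref{TM2} shows that $T_{\rm max}[D] = \sup_M T_M[D]$, so once $D \mapsto T_M[D]$ is known to be l.s.c.\ for every $M$, the claim \eqref{LSC1} follows because a supremum of l.s.c.\ functions is l.s.c.

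First, fix $T < T_{\rm max}[D]$ and choose, by virtue of \eqref{TM2}, an $M$ so large that $T_M[D] > T$. Introduce the frozen solutions $(\vr_n, \vt_n, \vu_n)(t, \cdot) = (\vr, \vt, \vu)[D_n](t \wedge T_M[D_n], \cdot)$, which by construction satisfy $\sup_{t, x}(\vr_n + \vt_n) \leq M$ on $[0, \infty) \times \mathbb{T}^3$. Since $D_n \to D$ in $D_{X,F,P}$ and by \eqref{TM5} the parameters $c_v^n, \mu_n, \kappa_n$ remain bounded and bounded away from the degenerate regime, the arguments of $\Lambda$ in \eqref{BC1} are uniformly controlled. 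Hence Theorem \ref{BCT1} delivers a bound on $(\vr_n, \vt_n, \vu_n)$ in $X_{T^+}$ uniform in $n$ for any fixed $T^+ > 0$. The compact Sobolev embeddings implicit in the definition of $X_{T^+}$ yield, along a subsequence, $\vr_n \to \vr$, $\vt_n \to \vt$, $\vu_n \to \vu$ strongly in $BC([0, T^+] \times \mathbb{T}^3; R^5)$ together with weak--$(*)$ convergence in $X_{T^+}$; this is enough to pass to the limit in the nonlinear fluxes and sources of \eqref{nsf1}--\eqref{nsf6}.

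Now assume, still along the subsequence, that $T_M[D_n] \to \tau \in [0, \infty]$, and pick $0 \leq T^- < \tau < T^+$. For $t \leq T^-$ we eventually have $t \leq T_M[D_n]$, so the frozen solutions agree with the true solutions of the NSF system with data $D_n$; passing to the limit and invoking uniqueness from Theorem \ref{LET1} identifies $(\vr, \vt, \vu)$ on $[0, T^-]$ as the unique strong solution $(\vr, \vt, \vu)[D]$. For $t \geq T^+$, eventually $t \geq T_M[D_n]$, so $(\vr_n, \vt_n, \vu_n)(t, \cdot) = (\vr_n, \vt_n, \vu_n)(T_M[D_n], \cdot)$ and by the very definition \eqref{TM1} this terminal value attains $\sup_x(\vr_n + \vt_n) \geq M$; uniform convergence transfers this to $\sup_x(\vr + \vt)(t, \cdot) \geq M$. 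Letting $T^- \uparrow \tau$ and $T^+ \downarrow \tau$ shows $T_M[D] \leq \tau$, hence $\liminf_{n\to\infty} T_M[D_n] \geq T_M[D]$. Since the limit is uniquely determined, the full sequence (not merely a subsequence) converges and \eqref{conver} holds on any $[0, T]$ with $T < T_{\rm max}[D]$.

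The principal technical subtlety is the transition through the a priori unknown limit $\tau$: one needs Sobolev compactness (and hence bound \eqref{BC1}) on an interval containing $\tau$, while at the same time using the complementary information that $\sup_x(\vr_n + \vt_n)$ reaches $M$ past $T_M[D_n]$. The frozen extension $t \mapsto t \wedge T_M[D_n]$ neatly reconciles these two viewpoints, because it preserves both the uniform sup--bound $\leq M$ needed to apply Theorem \ref{BCT1} and the attainment of the threshold $M$ at and after $T_M[D_n]$; the strong convergence in $BC$ is what makes the passage to the limit on both sides of $\tau$ rigorous.
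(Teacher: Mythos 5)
Your proposal is correct and follows essentially the same route as the paper: the frozen extension $t \wedge T_M[D_n]$, the uniform bounds from Theorem \ref{BCT1}, compactness in $BC([0,T^+]\times\mathbb{T}^3;R^5)$ together with weak-(*) convergence in $X_{T^+}$, the identification of the limit on $[0,T^-]$ versus the attainment of the threshold $M$ past $T^+$, and finally lower semi-continuity of $T_{\rm max}$ as a supremum of the l.s.c.\ maps $D\mapsto T_M[D]$. The only additions are the explicit subsequence extraction and the uniqueness-of-limit argument for full-sequence convergence, which the paper leaves implicit.
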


	\end{mdframed}

\begin{Remark} \label{TSR1}
	
	As agreed,
	the convergence \eqref{conver} means
	\begin{align}
		\vr_n &\to \vr \ \mbox{weakly-(*) in}\ L^\infty(0,T; W^{1,q}(\mathbb{T}^3)), \br
		\vt_n &\to \vt \ \mbox{weakly-(*) in}\ L^\infty(0,T; W^{2,2}(\mathbb{T}^3))
		\ \mbox{and weakly in}\ L^2(0,T; W^{2,q}(\mathbb{T}^3)), \br
		\vu_n &\to \vu \ \mbox{weakly-(*) in}\ L^\infty(0,T; W^{2,2}(\mathbb{T}^3; R^3))
		\ \mbox{and weakly in}\ L^2(0,T; W^{2,q}(\mathbb{T}^3; R^3)), \br
		\partial_t \vr_n &\to \partial_t \vr \ \mbox{weakly-(*) in}\ L^\infty(0,T; L^{q}(\mathbb{T}^3)),\br
		\partial_t \vt_n &\to \partial_t \vt \ \mbox{weakly-(*) in}\ L^\infty(0,T; L^{2}(\mathbb{T}^3))
		\ \mbox{and weakly in}\ L^2(0,T; W^{1,2}(\mathbb{T}^3)), \br
		\partial_t \vu_n &\to \partial_t \vu \ \mbox{weakly-(*) in}\ L^\infty(0,T; L^{2}(\mathbb{T}^3; R^3))
		\ \mbox{and weakly in}\ L^2(0,T; W^{1,2}(\mathbb{T}^3; R^3)),
		\label{conver1}
		\end{align}
	in particular,
	\[
[\vr, \vt, \vu](D_n) \to [\vr, \vt, \vu](D) \ \mbox{in}\ C([0,T] \times \mathbb{T}^3; R^5).
\]	
	\end{Remark}

\subsection{Higher regularity of solutions}

In the section, we consider the data enjoying higher regularity properties.

\subsubsection{Local existence by Valli and Zajaczkowski}

Local existence result can be shown in higher regularity spaces -- see \cite[Theorem A]{Vall1}, \cite{Vall2} and also
Valli and Zajaczkowski \cite{VAZA}.
\begin{Theorem}[{\bf Local existence [Valli]}] \label{LET2}
	Let
	\begin{equation} \label{LE8}
		\vr_0 \in W^{3,2}(\mathbb{T}^3),\ \vr_0 > 0,\
		\vt_0 \in W^{3,2}(\mathbb{T}^3),\ \vt_0 > 0, \
		\vu_0 \in W^{3,2}(\mathbb{T}^3; R^3),
	\end{equation}
	and
	\begin{align}
		\vc{g} &\in  L^2(0, \infty; W^{2,2}(\mathbb{T}^3; R^3)),\
		\partial_t \vc{g} \in L^2(0, \infty; L^2(\mathbb{T}^3; R^3)), \br
		Q &\in L^2(0, \infty; W^{2,2} (\mathbb{T}^3)),\
		\partial_t Q \in L^2(0, T; L^2(\mathbb{T}^3)), \ Q \geq 0.
		\label{LE9}
	\end{align}
	
	Then there exists $T > 0$ and a strong solution of the Navier--Stokes--Fourier system \eqref{nsf1}--\eqref{nsf7}
	unique in the class	
	\begin{align}
		\vr &\in C([0,T]; W^{3,2}(\mathbb{T}^3)),\ \partial_t \vr \in C([0,T]; W^{2,2}(\mathbb{T}^3)), \ \vr > 0, \label{LE10} \\
		\vt &\in C([0,T]; W^{3,2}(\mathbb{T}^3)) \cap L^2(0,T; W^{4,2}(\mathbb{T}^3)),\ \vt > 0, \br
		\partial^2_t \vt &\in L^2(0,T; L^{2}(\mathbb{T}^3)), \label{LE11}\\
		\vu &\in C([0,T]; W^{3,2}(\mathbb{T}^3; R^3)) \cap L^2(0,T; W^{4,2}(\mathbb{T}^3;R^3)), \br
		\partial^2_t \vu &\in L^2(0,T; L^2 (\mathbb{T}^3; R^3)). \label{LE12}	
	\end{align}

\end{Theorem}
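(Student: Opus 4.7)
The plan is to upgrade the low--regularity solution produced by Theorem \ref{LET1} to the higher Sobolev class asserted here, by showing that additional regularity of the data propagates along the life span. Under \eqref{LE8}--\eqref{LE9} the data in particular satisfy the hypotheses of Theorem \ref{LET1} (pick any $3 < q \leq 6$), so we already have a unique strong solution $(\vr, \vt, \vu)$ on some interval $[0,T_1]$ with $\vr > 0$. What remains is a higher--regularity \emph{a priori} estimate showing that this solution, in fact, lies in the class \eqref{LE10}--\eqref{LE12} on a possibly shorter $[0,T]$, together with a construction that realizes the regularity in the first place.

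For the a priori bounds I would differentiate the system up to order three in $x$ and once in $t$ and run energy estimates. The continuity equation is treated as a linear transport equation for $\vr$: applying $\partial_x^\alpha$ with $|\alpha| \leq 3$ and using Moser/Kato--Ponce type commutator estimates, one gets
\[
\frac{\mathrm{d}}{\mathrm{d}t} \| \vr \|_{W^{3,2}}^2
\lesssim \bigl(1 + \| \vu \|_{W^{3,2}} + \| \nabla \vu \|_{L^\infty}\bigr) \| \vr \|_{W^{3,2}}^2,
\]
which transports $W^{3,2}$ regularity of $\vr_0$ together with a positive lower bound via the characteristics of $\vu$. The momentum and internal energy balances are viewed as parabolic systems for $\vu$ and $\vt$ with coefficients depending on $\vr$; since the coefficients and forcing are controlled by $\vr$, $\vt$, $\vu$ in Sobolev norms already available, maximal $L^2$--parabolic regularity and elliptic regularity for $-\Div \S$ and $-\Delta$ on $\mathbb{T}^3$ give the $L^2_tW^{4,2}_x$ bounds on $\vu$ and $\vt$. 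Differentiating the momentum and energy equations once in $t$ and testing with $\partial_t \vu$, $\partial_t \vt$ provides the $\partial_t^2$ estimates in $L^2_tL^2_x$, after using the equations to express $\partial_t^2 \vu(0)$, $\partial_t^2 \vt(0)$ in terms of initial data; this is where the compatibility conditions enter.

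For the construction itself I would set up a linearization scheme: given $(\tilde\vr, \tilde\vt, \tilde\vu)$ in a ball of the target space, solve the linear transport equation for $\vr$ with velocity $\tilde\vu$, then solve the two (decoupled) linear parabolic equations for $\vt$ and $\vu$ with coefficients frozen at $(\tilde\vr, \tilde\vt, \tilde\vu)$. Standard linear theory gives a map $\Phi:(\tilde\vr, \tilde\vt, \tilde\vu)\mapsto (\vr, \vt, \vu)$; the higher--order estimates above, combined with a Grönwall argument on a short enough interval, show that $\Phi$ preserves a ball in the class \eqref{LE10}--\eqref{LE12} and is contractive in a lower norm (say $L^\infty_tL^2_x$), yielding a fixed point by Banach's principle and a standard weak--strong argument to upgrade the convergence. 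Uniqueness in the asserted class follows by energy estimates for the difference of two solutions.

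The principal obstacle, as usual for the compressible NSF system, is the mismatched character of the equations: the hyperbolic continuity equation transports the regularity of $\vr_0$ but only as well as $\nabla \vu$ is controlled in $L^\infty$, while the parabolic equations for $\vu$, $\vt$ require $\vr$ and $\vr^{-1}$ to be bounded in sufficiently strong norms to treat them as variable--coefficient parabolic problems. Keeping these two loops closed on a short interval requires carefully tracking the dependence on the $W^{3,2}$ norms of the data and on $\inf \vr_0$, and, crucially, handling the compatibility conditions needed to define $\partial_t^2\vu(0)$ and $\partial_t^2\vt(0)$ in $L^2(\mathbb{T}^3)$, without which the $\partial_t^2$ bounds in \eqref{LE11}--\eqref{LE12} cannot be obtained at $t=0$.
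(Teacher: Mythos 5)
The paper offers no proof of Theorem \ref{LET2}: it is quoted from Valli \cite{Vall1,Vall2} and Valli--Zajaczkowski \cite{VAZA} (with the routine adaptation to the space--periodic setting noted in Remark \ref{pop} for the analogous Theorem \ref{LET3}), so there is no internal argument to compare yours against. That said, your sketch is exactly the strategy of those references -- treat the continuity equation as a linear transport problem carrying $W^{3,2}$ regularity and the positive lower bound of $\vr$, treat the momentum and internal energy balances as variable--coefficient parabolic problems, close higher--order energy/maximal--regularity estimates on a short interval, and produce the solution by a linearization--plus--fixed--point scheme with contraction in a weaker norm -- so it is consistent with how the result is actually established. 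Two small remarks. First, your opening framing (``upgrade the Cho--Kim solution from Theorem \ref{LET1}'') is closer in spirit to what the paper does \emph{later}, in Lemma \ref{LEL1}, where a bootstrap via maximal $L^p$--$L^q$ regularity is used to show the maximal existence times coincide; for the local existence statement itself the direct construction you describe in your third paragraph is the one that is needed, and the bootstrap is redundant. Second, since \eqref{LE8} imposes $\vr_0>0$ and $\vt_0>0$ on the compact torus, both are bounded below away from zero, so $\partial_t\vu(0,\cdot)$ and $\partial_t\vt(0,\cdot)$ are obtained directly from the equations and lie in the required spaces; no genuine compatibility conditions arise here -- they are an issue only in the vacuum--admitting framework of Cho and Kim (cf.\ Remark \ref{RLE1}), so the ``principal obstacle'' you flag at the end is milder than you suggest.
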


Similarly \eqref{LE6}, \eqref{LE7}, we can define the maximal time interval $T^{V}_{\rm max}$ on which the solution
exists. Obviously,
\[
T_{\rm max}^{V} \leq T_{\rm max},
\]
where $T_{\rm max}$ is the life--span introduced in \eqref{LE7}.

\begin{Lemma} \label{LEL1}
	Under the hypotheses of Theorem \ref{LET1},
	\[
	T_{\rm max} = T^{V}_{\rm max}
	\]
	
\end{Lemma}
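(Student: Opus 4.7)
\textbf{Proof proposal for Lemma \ref{LEL1}.}

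The inclusion $T^V_{\rm max} \leq T_{\rm max}$ is immediate: any Valli-class solution (higher regularity) is automatically a strong solution in the Cho--Kim class \eqref{LE3}--\eqref{LE5}, and uniqueness in the Cho--Kim class forces $T_{\rm max} \geq T^V_{\rm max}$. So the content of the lemma is the opposite inequality $T_{\rm max} \leq T^V_{\rm max}$, which I would prove by contradiction: assume $T^V_{\rm max} < T_{\rm max}$ and produce a Valli-class extension beyond $T^V_{\rm max}$.

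First I would collect the bounds that follow from $T^V_{\rm max} < T_{\rm max}$. Since the Cho--Kim solution persists past $T^V_{\rm max}$, the blow-up alternative \eqref{LE7}--\eqref{BC2} yields
\[
\sup_{(0,T^V_{\rm max}) \times \mathbb{T}^3} \bigl( \vr + \vt \bigr) < \infty,
\]
whence Theorem \ref{BCT1} gives uniform control of the full Cho--Kim norms on $[0, T^V_{\rm max}]$, together with $\vr$ bounded away from zero. In particular, one has uniform bounds on $\vr$ in $C([0,T^V_{\rm max}];W^{1,q})$, on $\vt,\vu$ in $C([0,T^V_{\rm max}];W^{2,2}) \cap L^2(0,T^V_{\rm max};W^{2,q})$, on $\partial_t \vt,\partial_t \vu$ in $L^\infty(0,T^V_{\rm max};L^2) \cap L^2(0,T^V_{\rm max};W^{1,2})$, and on $\partial_t\vr$ in $L^\infty(0,T^V_{\rm max};L^q)$.

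The remaining step is to upgrade these Cho--Kim bounds to the Valli bounds \eqref{LE10}--\eqref{LE12} on $[0,T^V_{\rm max}]$. The plan is a standard persistence-of-regularity energy estimate: differentiate \eqref{nsf1}--\eqref{nsf3} three times in $x$, test against $\partial^\alpha \vr$, $\partial^\alpha \vt$, $\partial^\alpha \vu$ with $|\alpha|=3$, and control the resulting commutators and nonlinear terms by Moser/Kato--Ponce-type estimates. The right-hand sides split into (i) terms homogeneous in the third-order norms, which give a Gr\"onwall factor whose coefficient is the already-controlled Cho--Kim norm, (ii) lower-order terms bounded by the hypotheses \eqref{LE8}--\eqref{LE9} on the data, and (iii) the parabolic dissipation on $\vt$ and $\vu$, which absorbs the top-order diffusive commutators. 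Together with the analogous estimate for $\partial_t^2 \vt$, $\partial_t^2 \vu$ (obtained by differentiating the equations in time and using \eqref{LE9}), this yields the Valli estimates \eqref{LE10}--\eqref{LE12} on $[0,T^V_{\rm max}]$. Consequently $(\vr,\vt,\vu)(s,\cdot) \in W^{3,2}$ for some $s < T^V_{\rm max}$ sufficiently close to $T^V_{\rm max}$, and applying Theorem \ref{LET2} starting from time $s$ with these data produces a Valli-class solution on $[s, s + T_0]$ with $s + T_0 > T^V_{\rm max}$, contradicting the maximality of $T^V_{\rm max}$.

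The main obstacle is the third item: organising the Moser-type commutator estimates so that the top-order norm $\| (\vr,\vt,\vu)\|_{W^{3,2}}^2$ appears only linearly on the right-hand side with a coefficient depending on the Cho--Kim norms already controlled by Theorem \ref{BCT1}. The coupling through $p(\vr,\vt)\Div \vu$ and $\mathbb{S}(\Ds\vu):\Ds\vu$ is the delicate part, but positivity of $\vr$ and $\vt$ (both preserved by Theorem \ref{BCT1}) and the parabolic character of \eqref{nsf2}--\eqref{nsf3} make the estimate work in essentially the same way as in \cite{VAZA}.
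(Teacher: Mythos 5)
Your proposal is sound in outline but takes a genuinely different route from the paper. Both arguments agree that $T^V_{\rm max}\leq T_{\rm max}$ is immediate from uniqueness in the Cho--Kim class, and both reduce the converse to showing that the higher regularity cannot be lost before $T_{\rm max}$. The paper, however, does not run a higher-order energy scheme at all: it invokes the conditional regularity criterion of Feireisl--Novotn\'y--Sun (\cite[Theorem 2.1]{FeNoSun1}), which guarantees persistence of the Valli-class regularity as long as $\Grad \vu \in L^\infty((0,T)\times\mathbb{T}^3)$, and then verifies this single scalar condition for any Cho--Kim-class solution by treating the momentum equation as a parabolic problem with right-hand side in $L^\infty(0,T;L^q)$, applying maximal $L^p$--$L^q$ regularity, and interpolating to get $\Grad\vu \in C([0,T];W^{2\alpha-1,q})\hookrightarrow L^\infty$ for $(2\alpha-1)q>3$. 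Your version instead re-derives the propagation of $W^{3,2}$ regularity directly via third-order commutator/Moser estimates and a Gr\"onwall argument with coefficients controlled by Theorem \ref{BCT1}, followed by a restart of Theorem \ref{LET2} near $T^V_{\rm max}$. That is the classical continuation argument for hyperbolic--parabolic systems and it can be made to work, but note that the step you yourself flag as ``the main obstacle'' --- organising the coupled density/velocity commutators so the top norm enters only linearly, with the $\vr$-equation's loss of derivatives absorbed by the parabolic dissipation on $\vu$ --- is precisely the content of the Valli--Zaj\c{a}czkowski a priori estimates, and you leave it as an assertion. What the paper's route buys is that this entire block of work is outsourced to a quotable criterion, at the price of depending on \cite{FeNoSun1} and on maximal regularity theory; what your route buys is self-containedness, at the price of reproving the hardest estimates in \cite{VAZA}. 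If you pursue your version, you should also state explicitly that the data are assumed to satisfy \eqref{LE8}--\eqref{LE9} (otherwise $T^V_{\rm max}$ is not defined), and justify that the formal differentiations are legitimate on $[0,T^V_{\rm max})$ because the solution is already in the Valli class there.
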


\begin{proof}
	
	In view of the regularity criterion \cite[Theorem 2.1]{FeNoSun1}, it is enough to show that any solution
	belonging to the regularity class \eqref{LE3}--\eqref{LE5} satisfies
	\begin{equation} \label{LE13}
		\Grad \vu \in L^\infty((0,T) \times \mathbb{T}^3; R^{3 \times 3}).
	\end{equation}
	The momentum equation reads
	\[
	\partial_t \vu - \frac{1}{\vr} \Div \mathbb{S}(\Ds \vu) = - \vu \cdot \Grad \vu - \frac{1}{\vr}
	\Grad p(\vr, \vt) + \vc{g}.
	\]	
	As $(\vr, \vt, \vu)$ belong to the class \eqref{LE3}--\eqref{LE5}, it is easy to check that
	\[
	- \vu \cdot \Grad \vu - \frac{1}{\vr}
	\Grad p(\vr, \vt) + \vc{g} \in L^\infty(0,T; L^q (\mathbb{T}^d; R^3)),
	\]	
	where $3 < q \leq 6$ is the exponent introduced in Theorem \ref{LET1}. Note that, in view of \eqref{LE5},
	$\vr$ is bounded below in terms of $\inf \vr_0$, $T$ and $\| \Div \vu \|_{L^1(0,T; L^\infty(\mathbb{T}^3))}$.
	Consequently, in view of the maximal $L^p-L^q$ regularity estimates, we get
	\[
	\partial_t \vu \in L^p(0,T; L^q(\mathbb{T}^3; R^3)),\
	\vu \in L^p (0,T; W^{2,q} (\mathbb{T}; R^3) \ \mbox{for any}\ 1 \leq p < \infty
	\]
	as soon as $\vu_0 \in W^{2,q}(\mathbb{T}^3; R^3) \subset W^{3,2}(\mathbb{T}^3; R^3)$. Consequently, for any
	$\alpha < 1$, there exists $p > 1$ large enough so that
	\[
	\vu \in C([0,T]; W^{2 \alpha ,q}(\mathbb{T}^3; R^3)) \ \Rightarrow\
	\Grad \vu \in C([0,T]; W^{2\alpha - 1 ,q}(\mathbb{T}^3; R^{3 \times 3} )).
	\]
	Thus choosing $(2 \alpha - 1)q > 3$ yields \eqref{LE13}.
	
\end{proof}

\subsubsection{Local existence by Kawashima and Serre}

We have observed in the preceding section that
\begin{equation} \label{LE14}
	\vr(\tau, \cdot) \geq \inf \vr_0 \exp \left( - \int_0^\tau \| \Div \vu \|_{L^\infty(\mathbb{T}^3; R^3)} \dt \right).
\end{equation}	
Similarly, as
\[
c_v \partial_t \vt + c_v \vu \cdot \Grad \vt - \frac{1}{\vr} \kappa \Del \vt \geq \vt \Div\vu,
\]
we get, by the standard comparison theorem,
\begin{equation} \label{LE15}
	\vt(\tau, \cdot) \geq \inf \vt_0 \exp \left( - \frac{1}{c_v} \int_0^\tau \| \Div \vu \|_{L^\infty(\mathbb{T}^3; R^3)} \dt \right).
\end{equation}
Consequently, both the density and the temperature remain bounded below away from zero on any compact subinterval
of $[0, T_{\rm max})$, where $T_{\rm max}$ is the maximal life span for the solution constructed by Cho and Kim.

In view of \eqref{LE14}, \eqref{LE15}, the following local existence result in higher order Sobolev spaces holds, see Serre \cite[Theorem 1.2]{Serr3} and Kawashima and Shizuta \cite{KawShi}.

\begin{Theorem}[{\bf Local existence [Serre]}] \label{LET3}
	Let
	\begin{equation} \label{LE16}
		\vr_0 \in W^{k,2}(\mathbb{T}^3),\ \vr_0 > 0,\
		\vt_0 \in W^{k,2}(\mathbb{T}^3),\ \vt_0 > 0, \
		\vu_0 \in W^{k,2}(\mathbb{T}^3; R^3),
	\end{equation}
	and
	\begin{align}
		\vc{g} &\in  L^2(0, \infty; W^{k-1,2}(\mathbb{T}^3; R^3)),\
		\partial_t \vc{g} \in L^2(0, \infty; W^{k-3}(\mathbb{T}^3; R^3)), \br
		Q &\in L^2(0, \infty; W^{k-1,2} (\mathbb{T}^3)),\
		\partial_t Q \in L^2(0, T; W^{k-3,2}(\mathbb{T}^3)), \ Q \geq 0.
		\label{LE17}
	\end{align}
	for some integer
	\[
	k \geq 3.
	\]
	
	Then there exists $T > 0$ and a strong solution of the Navier--Stokes--Fourier system \eqref{nsf1}--\eqref{nsf7}
	unique in the class
	\begin{align}
		\vr &\in C([0,T]; W^{k,2}(\mathbb{T}^3)),\ \partial_t \vr \in C([0,T]; W^{k-1,2}(\mathbb{T}^3)), \ \vr > 0, \label{LE18} \\
		\vt &\in C([0,T]; W^{k,2}(\mathbb{T}^3)),\
		\vt > 0, \
		\partial_t \vt \in L^2(0,T; W^{k-1,2}(\mathbb{T}^3)), \label{LE19}\\
		\vu &\in C([0,T]; W^{k,2}(\mathbb{T}^3; R^3)) ,\
		\partial_t \vu \in L^2(0,T; W^{k-1,2} (\mathbb{T}^3; R^3)). \label{LE20}	
	\end{align}

\end{Theorem}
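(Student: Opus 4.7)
The plan is to reduce the theorem to the already established local existence of Theorem \ref{LET1} and then propagate the higher $W^{k,2}$ regularity by energy estimates on differentiated equations. First I would note that for $k \geq 3$ the Sobolev embedding on $\mathbb{T}^3$ yields $W^{k,2} \hookrightarrow W^{2,6} \hookrightarrow W^{1,q}$ for $3 < q \leq 6$, and $W^{k,2} \hookrightarrow W^{2,2}$. Hence the data satisfying \eqref{LE16}, \eqref{LE17} in particular satisfy the hypotheses of Theorem \ref{LET1}, and Lemma \ref{LEL1} gives a local strong solution on $[0, T_{\rm max})$ with the lower-regularity bounds. I would then fix any $T < T_{\rm max}$ and work on $[0,T]$, where $\vr \geq \vr_*(T) > 0$ and $\vt \geq \vt_*(T) > 0$ by \eqref{LE14}--\eqref{LE15}, so that $1/\vr$ and $1/\vt$ are uniformly bounded.

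Next, I would apply $\partial^\alpha$ with $|\alpha| \leq k$ to the rewritten equations
\begin{align*}
\partial_t \vr + \vu \cdot \Grad \vr + \vr \Div \vu &= 0, \\
\partial_t \vu + \vu \cdot \Grad \vu - \tfrac{1}{\vr} \Div \mathbb{S}(\Ds \vu) &= - \tfrac{1}{\vr}\Grad(\vr\vt) + \vc{g}, \\
c_v \partial_t \vt + c_v \vu \cdot \Grad \vt - \tfrac{1}{\vr}\kappa \Del \vt &= \tfrac{1}{\vr}\mathbb{S}(\Ds \vu):\Ds \vu - \vt \Div \vu + Q,
\end{align*}
test the results against $\partial^\alpha \vr$, $\vr \partial^\alpha \vu$ and $c_v\vr\partial^\alpha \vt$ respectively, and combine the resulting identities. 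The symmetric structure (multiplication by $\vr$ on the parabolic equations) cancels the top-order transport derivative of $\vr$ from the continuity equation against the $\Grad p/\vr$ term, as in the classical Matsumura--Nishida symmetrisation used by Serre and Kawashima--Shizuta. Nonlinear remainders are controlled by Moser and Kato--Ponce commutator estimates of the form
\[
\|[\partial^\alpha, f]g\|_{L^2} \lesssim \|\Grad f\|_{L^\infty}\|g\|_{W^{k-1,2}} + \|f\|_{W^{k,2}}\|g\|_{L^\infty},
\]
together with the algebra property of $W^{k,2}$ for $k > 3/2$, the composition estimate for $1/\vr$ (using the lower bound), and absorption of top-order temperature and velocity derivatives into the parabolic dissipation $\kappa\|\Grad \vt\|_{W^{k,2}}^2 + \mu\|\Grad \vu\|_{W^{k,2}}^2$. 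This yields a Gr\"onwall inequality
\[
\tfrac{\dd}{\dd t} \mE_k(t) + \mD_k(t) \leq C\big(1 + \mE_k(t)\big)^N \mE_k(t) + \Phi(t),
\]
where $\mE_k$ controls $\|\vr\|_{W^{k,2}}^2 + \|\vt\|_{W^{k,2}}^2 + \|\vu\|_{W^{k,2}}^2$, $\mD_k$ is the parabolic dissipation, and $\Phi$ collects the forcing contributions from \eqref{LE17}. Integrating from $0$ gives a time $T > 0$ and a uniform bound on $\mE_k$ and $\int_0^T \mD_k\,\dt$.

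Once the spatial regularity is established, the time-regularity claims in \eqref{LE18}--\eqref{LE20} follow by reading off $\partial_t \vr, \partial_t \vt, \partial_t \vu$ directly from the equations: $\partial_t \vr = -\Div(\vr\vu) \in C([0,T]; W^{k-1,2})$ by the algebra property, and analogously $\partial_t \vu, \partial_t \vt \in L^2(0,T; W^{k-1,2})$. Uniqueness is inherited from Theorem \ref{LET1}, and continuity in time with values in the top Sobolev space follows from the standard Lions--Magenes argument combined with weak continuity and the energy identity. I expect the main obstacle to be the careful symmetrisation that cancels the top-order $\Grad p$ contribution against the top-order transport term in the continuity equation, because a naive energy estimate loses one derivative on $\vr$; this is exactly the step where the Kawashima--Shizuta framework for symmetric hyperbolic--parabolic systems is essential and where the hypothesis $k \geq 3$ (ensuring $W^{k,2} \hookrightarrow C^1$ for control of $\|\Grad \vu\|_{L^\infty}$ in the density lower bound \eqref{LE14}) is used.
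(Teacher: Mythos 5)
Your proposal is mathematically sensible, but it takes a genuinely different route from the paper for the simple reason that the paper does not prove Theorem \ref{LET3} at all: the result is quoted from Serre \cite[Theorem 1.2]{Serr3} and Kawashima--Shizuta \cite{KawShi}. The only content the paper itself supplies here is (i) the lower bounds \eqref{LE14}, \eqref{LE15}, which guarantee that $\vr$ and $\vt$ remain bounded away from zero on compact subintervals of $[0,T_{\rm max})$ so that the cited hyperbolic--parabolic theory applies, and (ii) Remark \ref{pop}, noting that the adaptation from the Cauchy problem on $R^3$ to the periodic setting is straightforward. What you write is essentially a reconstruction of the proof inside those references: reduction to Theorem \ref{LET1} via $W^{k,2}\hookrightarrow W^{2,2}\cap W^{1,q}$, then propagation of $W^{k,2}$ regularity by high-order energy estimates for the symmetrized system with Moser/commutator bounds and a Gr\"onwall closure. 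This is the standard and correct strategy, and it buys a self-contained argument where the paper is content to cite. Two points would need care if you carried it out. First, the a priori estimate alone does not upgrade the regularity of the solution already constructed in Theorem \ref{LET1}: applying $\partial^\alpha$ with $|\alpha|=k$ to a solution known only in the class \eqref{LE3}--\eqref{LE5} is not justified, so you must either run the estimates on an approximation scheme (Galerkin, linearized iteration, or mollified equations) and pass to the limit, or prove existence independently in the $W^{k,2}$ class and identify the two solutions by uniqueness in the larger class. Second, the cancellation of the top-order pressure term against the top-order transport term requires the correct symmetrizing weights (e.g.\ testing the differentiated continuity equation against $(\vt/\vr)\,\partial^\alpha\vr$ rather than $\partial^\alpha\vr$), and this is exactly where the positive lower bounds \eqref{LE14}, \eqref{LE15} enter --- which is the one ingredient the paper does verify before invoking the references.
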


\begin{Remark} \label{pop}
	
	As a matter of fact, the results by Serre and Kawashima, Shizuta have been established for the Cauchy problem, meaning the underlying spatial domain is $R^3$. Adapting the proof 
	to the space--periodic setting is straightforward.
	
	\end{Remark}

A short inspection of the proof reveals that the maximal time interval for the local solutions in
Theorem \ref{LET3} is the same for all $k \geq 3$ and therefore coincides with $T_{\rm max}$. In view of Lemma \ref{LEL1}, the maximal existence time of the local solutions in Theorems \ref{LET1}--\ref{LET3} coincide.

Summarizing the above discussion, we get the following version of the local existence for the Navier--Stokes--Fourier system for regular data.

\begin{Theorem}[{\bf Local existence for regular initial data}] \label{LET4}
	
	Let the data belong to the class
	\begin{equation} \label{LE21}
		\vr_0 \in W^{1,q}(\mathbb{T}^3),\ \vr_0 > 0,\
		\vt_0 \in W^{2,2}(\mathbb{T}^3),\ \vt_0 > 0, \
		\vu_0 \in W^{2,2}(\mathbb{T}^3; R^3),
	\end{equation}
	and
	\begin{align}
		\vc{g} &\in BC([0,\infty); L^2(\mathbb{T}^3; R^3))
		\cap L^2(0, \infty; L^q(\mathbb{T}^3; R^3)),\
		\partial_t \vc{g} \in L^2(0, \infty; W^{-1,2}(\mathbb{T}^3; R^3)), \br
		Q &\in BC([0,\infty); L^2(\mathbb{T}^3))
		\cap L^2(0, \infty; L^q(\mathbb{T}^3)),\
		\partial_t Q \in L^2(0, T; W^{-1,2}(\mathbb{T}^3)), \ Q \geq 0,
		\label{LE22}
	\end{align}
	where
	\begin{equation} \label{LE23}
		3 < q \leq 6.
	\end{equation}	
	
	{\bf (i)} Then there exists $0 < T_{\rm max} \leq \infty$ and a strong solution $(\vr, \vt, \vu)$ of the Navier--Stokes--Fourier system unique in the class
	\begin{align}
		\vr &\in C([0,T]; W^{1,q}(\mathbb{T}^3)),\ \partial_t \vr \in C([0,T]; L^q(\mathbb{T}^3)), \ \vr > 0, \label{LE24} \\
		\vt &\in C([0,T]; W^{2,2}(\mathbb{T}^3)) \cap L^2(0,T; W^{2,q}(\mathbb{T}^3)),\ \vt > 0, \br
		\partial_t \vt &\in L^\infty(0,T; L^2(\mathbb{T}^3)) \cap L^2(0,T; W^{1,2}(\mathbb{T}^3)) \label{LE25}\\
		\vu &\in C([0,T]; W^{2,2}(\mathbb{T}^3; R^3)) \cap L^2(0,T; W^{2,q}(\mathbb{T}^3;R^3)), \br
		\partial_t \vu &\in L^\infty(0,T; L^2(\mathbb{T}^3; R^3)) \cap L^2(0,T; W^{1,2}(\mathbb{T}^3; R^3)). \label{LE26}	
	\end{align}
	for any $0 < T < T_{\rm max}$. The maximal time interval $T_{\rm max}$ is bounded below by a positive constant
	depending only on the norm of the data in the spaces \eqref{LE21}--\eqref{LE23}. Moreover,
	\begin{align} \label{LE27}
		T_{\rm max} &< \infty \br &\Rightarrow \br
		\lim_{t \to T_{\rm max} -} &\left( \| \vr(t, \cdot) \|_{W^{1,q}(\mathbb{T}^3)} + \| \vt(t, \cdot) \|_{W^{2,2}(\mathbb{T}^3)} + \| \vu(t, \cdot) \|_{W^{2,2}(\mathbb{T}^3; R^3)}
		+ \|  \vr^{-1}(t, \cdot) \|_{C(\mathbb{T}^3)} \right) = \infty .
	\end{align}
	
	\medskip
	
	{\bf (ii)} If, in addition,
	\begin{equation} \label{LE28}
		\vr_0 \in W^{k,2}(\mathbb{T}^3),\ \vr_0 > 0,\
		\vt_0 \in W^{k,2}(\mathbb{T}^3),\ \vt_0 > 0, \
		\vu_0 \in W^{k,2}(\mathbb{T}^3; R^3),
	\end{equation}
	and
	\begin{align}
		\vc{g} &\in  L^2(0, \infty; W^{k-1,2}(\mathbb{T}^3; R^3)),\
		\partial_t \vc{g} \in L^2(0, \infty; W^{k-3}(\mathbb{T}^3; R^3)), \br
		Q &\in L^2(0, \infty; W^{k-1,2} (\mathbb{T}^3)),\
		\partial_t Q \in L^2(0, T; W^{k-3,2}(\mathbb{T}^3)), \ Q \geq 0.
		\label{LE29}
	\end{align}
	for some integer
	\begin{equation} \label{LE30}
		k \geq 3.
	\end{equation}
	
	Then
	\begin{align}
		\vr &\in C([0,T]; W^{k,2}(\mathbb{T}^3)),\ \partial_t \vr \in C([0,T]; W^{k-1,2}(\mathbb{T}^3)), \ \vr > 0, \label{LE31} \\
		\vt &\in C([0,T]; W^{k,2}(\mathbb{T}^3)),\
		\vt > 0, \
		\partial_t \vt \in L^2(0,T; W^{k-1,2}(\mathbb{T}^3)), \label{LE32}\\
		\vu &\in C([0,T]; W^{k,2}(\mathbb{T}^3; R^3)) ,\
		\partial_t \vu \in L^2(0,T; W^{k-1,2} (\mathbb{T}^3; R^3)). \label{LE33}	
	\end{align}
	for any $0 < T < T_{\rm max}$.
	
\end{Theorem}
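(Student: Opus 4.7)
Part (i) is a direct rephrasing of Theorem \ref{LET1} together with the lower bound on the existence time recorded in Remark \ref{RLE1} and the blow--up alternative \eqref{LE7}. The only point worth emphasising is that the occurrence of $\|\vr^{-1}(t,\cdot)\|_{C(\mathbb{T}^3)}$ in \eqref{LE27} is consistent, because the explicit lower bound \eqref{LE14} shows that $\vr^{-1}$ stays finite as long as $\Div \vu \in L^1(0, T; L^\infty(\mathbb{T}^3))$, which is guaranteed whenever the other quantities on the left of \eqref{LE27} stay bounded (via the embedding $W^{2,2}(\mathbb{T}^3) \hookrightarrow L^\infty$).

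For part (ii), denote by $T^{S,k}_{\rm max}$ the maximal existence time furnished by Serre's Theorem \ref{LET3} in the class \eqref{LE18}--\eqref{LE20}. Since the Serre class is contained in the Cho--Kim class, $T^{S,k}_{\rm max} \leq T_{\rm max}$. My plan is to prove the reverse inequality by induction on $k \geq 3$. The base case $k = 3$ is exactly Lemma \ref{LEL1}. For the inductive step, assume $T^{S,k}_{\rm max} = T_{\rm max}$ and suppose the data satisfy \eqref{LE28}--\eqref{LE29} at level $k+1$. Fix an arbitrary $T < T_{\rm max}$; the inductive hypothesis yields $(\vr,\vt,\vu)$ in the class \eqref{LE31}--\eqref{LE33} on $[0,T]$ with bounds depending on $T$ and on the $W^{k,2}$--norms of the data. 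Applying $\partial^\alpha$ with $|\alpha|=k+1$ to the continuity equation and invoking the standard transport estimate, with Kato--Ponce commutators absorbing the lower--order interactions, controls $\|\vr(t,\cdot)\|_{W^{k+1,2}}$ in terms of $\int_0^t \|\Grad \vu(\tau,\cdot)\|_{W^{k+1,2}} \dd\tau$ plus quantities already at hand. Applying $\partial^\alpha$ to the momentum and internal energy equations and using parabolic maximal $L^p$--$L^q$ regularity for the linearised system with coefficients $1/\vr$, $\mu$, $\kappa$ then controls $\|\vu(t,\cdot)\|_{W^{k+1,2}}$ and $\|\vt(t,\cdot)\|_{W^{k+1,2}}$ in terms of the $W^{k+1,2}$--norm of $\vr$ and of the initial data and forcings. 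The coupled Gr\"onwall inequality obtained from these three estimates prevents any blow--up of the $W^{k+1,2}$--norm on $[0,T]$, so the Serre solution at level $k+1$ can be continued past $T$, proving $T^{S,k+1}_{\rm max} \geq T$. Since $T < T_{\rm max}$ is arbitrary, the induction closes.

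The main obstacle is the asymmetry between the hyperbolic continuity equation, whose gain in spatial regularity for $\vr$ is limited to what the velocity supplies, and the parabolic momentum/energy equations, whose coefficients $1/\vr$ require control of $\vr$ at the highest spatial order. Closing the estimate therefore demands a simultaneous energy argument across all three unknowns rather than a naive bootstrap that would decouple them. A secondary issue is to match the time regularity of the forcings $\partial_t \vc{g}, \partial_t Q \in L^2(0,\infty; W^{k-3,2}(\mathbb{T}^3))$ with the time derivatives produced by differentiating the equations once; this is the mechanism by which the time regularities claimed in \eqref{LE31}--\eqref{LE33} are derived, and it must be carried through uniformly on compact subintervals of $[0, T_{\rm max})$. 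These book--keeping steps are precisely what is meant by the "short inspection" of Serre's proof that the maximal existence time is independent of $k$, and combined with Lemma \ref{LEL1} they identify this common maximal time with $T_{\rm max}$.
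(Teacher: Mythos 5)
Your proof follows essentially the same route as the paper: part (i) is read off from Theorem \ref{LET1}, Remark \ref{RLE1} and the blow--up alternative \eqref{LE7}, while part (ii) rests on Lemma \ref{LEL1} together with the $k$--independence of the maximal existence time of the higher--order (Serre/Kawashima--Shizuta) solutions of Theorem \ref{LET3}. The paper settles that last point by appealing to ``a short inspection of the proof'' of the cited results, supplemented by the lower bounds \eqref{LE14}--\eqref{LE15} keeping $\vr$ and $\vt$ away from zero on compact subintervals of $[0,T_{\rm max})$, whereas you sketch the underlying persistence--of--regularity induction explicitly; these are the same argument presented at different levels of detail.
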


\subsubsection{Conditional regularity with regular initial data}

Combining Theorem \ref{BCT1} with Theorem \ref{LET4} and the bounds \eqref{LE14}, \eqref{LE15}, we obtain:

\begin{mdframed}[style=MyFrame]
	
	\begin{Theorem} \label{BCT2} {\bf (Regularity criterion, higher order estimates)}
		
		In addition to the hypotheses of Theorem \ref{BCT1}, suppose that the data belong to the regularity class
		\eqref{LE28}--\eqref{LE30}.
		
		Then there exists a function $\Lambda$, bounded for bounded values of its argument, such that
		\begin{align}
			\sup_{t \in [0,T]} &\| \vr(t, \cdot) \|_{W^{k,2}(\mathbb{T}^3)}	
			\sup_{t \in [0,T]} \| \vt(t, \cdot) \|_{W^{k,2}(\mathbb{T}^3)} +
			\sup_{t \in [0,T]} \| \vu(t, \cdot) \|_{W^{k,2}(\mathbb{T}^3; R^3)} \br
			&+ \sup_{t \in [0,T]} \| \partial_t \vr \|_{W^{k-1,2}(\mathbb{T}^3; R^3)}
			+ \int_0^T \| \partial_t \vt \|_{W^{k-1,2}(\mathbb{T}^3)}^2 \dt + \int_0^T \| \partial_t \vu \|_{W^{k-1,2}(\mathbb{T}^3; R^3)}^2 \dt \br
			&\leq \Lambda \Big( T , \| \vr_0 \|_{W^{k,2}(\mathbb{T}^3)} , \| \vt_0 \|_{W^{k,2}(\mathbb{T}^3)}
			, \| \vu_0 \|_{W^{k,2}(\mathbb{T}^3; R^3)}, 	\br
			&\quad  \int_0^T \| \vc{g} \|^2_{W^{k-1,2}(\mathbb{T}^3; R^3))} \dt,
			\int_0^T \| Q \|^2_{W^{k-1,2}(\mathbb{T}^3))} \dt, \br
			&\quad  \int_0^T \| \partial_t \vc{g} \|^2_{W^{k-3,2}(\mathbb{T}^3; R^3))} \dt,
			\int_0^T \| \partial_t Q \|^2_{W^{k-3,2}(\mathbb{T}^3))} \dt,\br &\quad \left. \sup_{(0,T) \times \mathbb{T}^3} \vr ,
			\sup_{(0,T) \times \mathbb{T}^3} \vt, \left( \inf_{\mathbb{T}^3} \vr_0 \right)^{-1}, \left( \inf_{\mathbb{T}^3} \vt_0 \right)^{-1}, c_v, \mu, \mu^{-1}, \eta, \kappa, \kappa^{-1} \label{BC2a}
			\right) \end{align}
		for any $0 < T < T_{\rm max}$.	
		
	\end{Theorem}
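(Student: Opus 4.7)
The plan is to bootstrap the conclusions of Theorem~\ref{BCT1} to higher Sobolev regularity. Two structural observations drive the argument. First, every argument appearing in the function $\Lambda$ of Theorem~\ref{BCT1} is also admissible as an argument in the $\Lambda$ of the present statement, so the lower-order bounds~\eqref{BC1} may be used as ``background controls'' throughout the proof. Second, by Theorem~\ref{LET4}(ii) the solution is already known to lie in the higher regularity class \eqref{LE31}--\eqref{LE33} on $[0,T_{\rm max})$, so only quantitative estimates need to be produced.

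The first step is to upgrade the pointwise lower bounds on $\vr$ and $\vt$. The density identity~\eqref{LE14} controls $\inf \vr(\tau,\cdot)$ from below in terms of $(\inf \vr_0)^{-1}$ and $\int_0^T \| \Div \vu \|_{L^\infty(\mathbb{T}^3)} \dt$, and the latter is dominated by the bound on $\int_0^T \| \vu \|_{W^{2,q}(\mathbb{T}^3;R^3)}^2 \dt$ supplied by \eqref{BC1} thanks to the embedding $W^{2,q}(\mathbb{T}^3) \hookrightarrow W^{1,\infty}(\mathbb{T}^3)$ valid for $q > 3$. The analogous comparison~\eqref{LE15} yields the same kind of bound for $\vt$. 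Hence $\| 1/\vr \|_{L^\infty}$ and $\| 1/\vt \|_{L^\infty}$ on $[0,T] \times \mathbb{T}^3$ are controlled by quantities on which $\Lambda$ is allowed to depend.

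The core of the proof is an induction on $k \geq 3$ in which the three equations of the NSF system are estimated by tools tailored to their character. For the continuity equation I apply $D^\alpha$ with $|\alpha|=k$, test with $D^\alpha \vr$, integrate by parts, and use the Moser--Kato--Ponce product and commutator estimates to obtain a differential inequality of the schematic form
\[
\frac{\mathrm{d}}{\mathrm{d}t} \| \vr \|_{W^{k,2}}^2 \leq C\, \| \vr \|_{W^{k,2}}^2 + C\, \| \vr \|_{L^\infty}^2 \| \vu \|_{W^{k+1,2}}^2,
\]
where $C$ depends only on quantities controlled by \eqref{BC1} and on the induction hypothesis at level $k-1$. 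Rewriting the momentum and internal energy equations as the variable-coefficient parabolic systems
\[
\partial_t \vu - \tfrac{1}{\vr} \Div \mathbb{S}(\Ds \vu) = -\vu \cdot \Grad \vu - \tfrac{1}{\vr} \Grad(\vr \vt) + \vc{g}, \qquad c_v \partial_t \vt - \tfrac{\kappa}{\vr} \Del \vt = -c_v \vu \cdot \Grad \vt + \tfrac{1}{\vr}\mathbb{S}(\Ds \vu) : \Ds \vu - \vt \Div \vu + Q,
\]
with coefficient $1/\vr$ now uniformly positive and bounded, I would invoke $L^2$--maximal regularity at level $k-1$ to control $\int_0^T \| \vu \|_{W^{k+1,2}}^2 \dt$ and $\int_0^T \| \vt \|_{W^{k+1,2}}^2 \dt$ by the $L^2_t W^{k-1,2}_x$ norms of the right-hand sides plus the initial data. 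Moser-type product inequalities in turn reduce these right-hand sides to $\| \vr \|_{W^{k,2}}$, $\| \vt \|_{W^{k,2}}$, $\| \vu \|_{W^{k,2}}$ and the forcing norms \eqref{LE29}, each multiplied by factors that earlier steps have already controlled.

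The principal obstacle is the coupling between the transport estimate for $\vr$, whose right-hand side contains $\| \vu \|_{W^{k+1,2}}$, and the parabolic estimate for $(\vu,\vt)$, which itself requires $\vr$ at order $k$ to handle the pressure term $(1/\vr)\Grad(\vr\vt)$ at order $k-1$. I would close the loop by combining the three inequalities, applying a Young inequality to absorb the top-order factor $\| \vu \|_{W^{k+1,2}}^2$ from the transport part into the parabolic dissipation, and then invoking Gronwall's lemma on the combined quantity $\sup_{0 \leq s \leq t}\bigl( \| \vr(s) \|_{W^{k,2}}^2 + \| \vt(s) \|_{W^{k,2}}^2 + \| \vu(s) \|_{W^{k,2}}^2 \bigr)$. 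The induction starts at $k=3$, where the bounds of Theorem~\ref{BCT1} furnish the required $W^{2,2}$ background. The claimed bounds on $\partial_t \vr$, $\partial_t \vt$, $\partial_t \vu$ in \eqref{BC2a} are then read directly from the NSF equations once the spatial norms are in hand.
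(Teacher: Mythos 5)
Your plan is sound and is, in substance, the argument the paper has in mind; the difference is one of presentation rather than strategy. The paper offers no written proof of Theorem~\ref{BCT2} beyond the single sentence preceding it: the result is obtained by ``combining'' the conditional regularity criterion of Theorem~\ref{BCT1} (which supplies the lower-order background bounds), the qualitative membership in the class \eqref{LE31}--\eqref{LE33} from Theorem~\ref{LET4}(ii), and the lower bounds \eqref{LE14}--\eqref{LE15} (which is exactly why $(\inf_{\mathbb{T}^3}\vr_0)^{-1}$ and $(\inf_{\mathbb{T}^3}\vt_0)^{-1}$ enter the argument list of $\Lambda$ in \eqref{BC2a}). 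The quantitative higher-order estimate itself is delegated to the cited local existence literature (Serre, Kawashima--Shizuta, Valli), and what you have written out --- tame commutator estimates for the transport equation, $L^2$ parabolic regularity for the momentum and temperature equations with the coefficient $1/\vr$ made uniformly elliptic by \eqref{LE14}, absorption of the top-order coupling by Young, and Gronwall on the combined energy --- is precisely the standard machinery underlying those references. So you are filling a gap the paper leaves to the reader, correctly. One point deserves more care than your sketch gives it: the ``background controls'' furnished by \eqref{BC1} do not directly include $\|\Grad\vr\|_{L^\infty}$ (only $\sup_t\|\vr\|_{W^{1,q}}$ with $q\le 6$) nor $\|\Grad^2\vu\|_{L^\infty}$, yet quantities of this type appear as coefficients in the tame product and commutator estimates at the base level $k=3$. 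Closing this requires an intermediate bootstrap --- essentially the maximal $L^p$--$L^q$ regularity step already carried out in the proof of Lemma~\ref{LEL1}, which upgrades $\vu$ to $C([0,T];W^{2\alpha,q})$ with $(2\alpha-1)q>3$ and hence controls the Lipschitz norms of $\vu$ and then, via the transport equation, of $\vr$ --- before the $H^k$ energy induction can be run with coefficients that are integrable in time and independent of the top-order unknowns. With that layer inserted, your Gronwall closure is linear in the top-order energy and the argument goes through.
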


\end{mdframed}

\subsubsection{Stability in higher order topologies}

In view of the refined regularity criterion stated in Theorem \ref{BCT2}, the spaces $X$, $X_T$ as well as the data space $D_{X,F,P}$ can be modified to develop the same theory in stronger topologies. In particular, we get
the stability result stated in Theorem \ref{TST1}. We leave the details to the interested reader.

\section{Statistical solutions}
\label{SE}

We introduce a suitable topology on the phase space $X$ and the corresponding concept of statistical solution
via the  push forward measure argument. To simplify presentation, we denote by
$\vc{U} = (\vr, \vt, \vu)$ the solution of the Navier--Stokes system, $\vc{U}_0 = (\vr_0, \vt_0, \vu_0)$ the initial data, and $(f,p) \in F \times P$ the driving force and the parameter set, respectively.

In particular,
$\vU [\vU_0, f, p] (t, \cdot)$ stands for the value of the strong solution of the NSF system emanating from initial
data $\vU_0$, driven by the external forcing $f$, and with constitutive parameters $p$ evaluated at a time $t$. For the above definition to make sense we may either (i) focus only on $t < T_{\rm max}[\vU_0, f,p]$ or
(ii) define the solution $\vU$ ``after the blow up time'' $T_{\rm max}$. Pursuing the later strategy, we set
\[
\vc{U}(t, \cdot) = \vU_{\infty} \equiv (0,0,0)
\ \mbox{whenever}\ t \geq T_{\rm max}.
\]
At first glance, this may seem a bit surprising as, apparently, the solution is not ``continuous'' for $t \to T_{\rm max}$. However such a choice is convenient as (i) the value $\vU_\infty$ is never reached by any trajectory in finite time (ii) $\vU_0 = \vU_\infty$ is not an admissible initial state so isolated in the family of initial data.

Our goal in the next section is to modify the topology on the phase $X$ in such a way that $\vU \to \vU_\infty$ for
any solution trajectory whenever $t \to T_{\rm max}$, meaning
\[
\left(  \| \vU(t, \cdot) \|_X + \| \vr^{-1}(t, \cdot) \|_{C(\mathbb{T}^3)} \right) \to \infty \ \mbox{as}\ t \to T_{\rm max}-.
\]
To this end, we convert $X$ to a Riemannian manifold with boundary modelled over $X$.

\subsection{Topology on the phase space}

The space $X$ is a reflexive separable Banach space, in particular a Polish metric space. We consider its \emph{open} subset
\[
X^+ = \left\{ (\vr, \vt, \vu) \in X \ \Big|\  \| \vr^{-1} \|_{C(\mathbb{T}^3)} < \infty,\
{\| \vt^{-1} \|_{C(\mathbb{T}^3)} < \infty}  \right\}.
\]
Note carefully that $X^+$ is an \emph{open} subset of the Polish space $X$, whence Polish as well.
Finally, we set
\[
X^+_{\infty} = X^+ \cup \{ \vU_\infty \},\ \vU_\infty = (0,0,0).                                      
\]

Next, we introduce a class of functions
\begin{equation} \label{T1}
	\mathcal{C} = \left\{ \mathcal{F} \in X^+_{\infty} \to R \ \Big|\                      
	\mathcal{F}|_{X^+} \in BC(X^+),\
	\lim_{\| \vU \|_X +  \| \vr^{-1} \|_{C(\mathbb{T}^3)} + {\| \vt^{-1} \|_{C(\mathbb{T}^3)}} \to \infty } \mathcal{F}|_{X^+} (\vU) =
	\mathcal{F}(0) \right\}.
	\end{equation}
Our goal is to define a suitable metrics on the space $X^+_\infty$, for which the class $\mathcal{C}$ will coincide with $BC(X_{\infty}^+).$
To this end, consider the functions -- the Fourier coefficients -- defined as
\[
F_{\vc{k}}(\vc{U}) = \int_{\mathbb{T}^3} \vc{U} \cdot e_{\vc{k}} \ \dx,\ e_\vc{k} \ \mbox{are normed trigonometric polynomials}, \ \vc{k} \in Z^5,\ \vU \in X,
\]                                                                                                                                  
together with $G : X^+_\infty \to R$, 	
\[
G (0) \equiv G(\vU_\infty) = 0 , \ G (\vU) = \left( 1 +  \| \vU \|_X + \| \vr^{-1} \|_{C(\mathbb{T}^3)}                      
+ {\| \vt^{-1} \|_{C(\mathbb{T}^3)} < \infty}  \right)^{-2} \ \mbox{if}\
\vU = (\vr, \vt, \vu) \in X^+ .
\]
Finally, set
\[
Q_{i, \vc{k}}(\vU)  = \left\{                                                                                                                                             
\begin{array}{l} G ( \vU ) \ \mbox{if} \ i = 1, \\ \\                             
	G ( \vU  ) F_{\vc{k}} (\vU) \ \mbox{if}\  i = 2. \end{array} \right.	
\]	
Observe that $Q_{i, \vc{k}} \in \mathcal{C}$, $i=1,2$, $\vc{k} \in Z^5$.

 \begin{Proposition}[{\bf Topology on the phase space $X^+_\infty$}]
 Let
 \begin{equation} \label{T2}
 	d( \vc{U}; \vc{V} ) = \sum_{ i =1,2 , \vc{k} \in Z^5 } \exp (- |\vc{k}|) \frac{ \left| Q_{i,\vc{k} } (\vU) -
 		Q_{i,\vc{k} } (\vc{V}) \right| }{ 1 + \left| Q_{i,\vc{k} } (\vU) -
 		Q_{i,\vc{k} } (\vc{V}) \right| }.
 	\end{equation}

 Then $d$ defines a metric on $X^+_\infty$ and $(X^+_\infty; d)$ is a Polish space. In addition,
 the class $\mathcal{C}$ coincides with $BC(X^+_\infty)$ -- the space of bounded continuous functions on $X^+_\infty$.
 
 \begin{Remark} 
 	
 	Note carefully that the topology on the space $X^+_\infty$ \emph{is not} a one point compactification of the space $X^+$ in the sense of Alexandroff.                                                                                                                            
 	
 	\end{Remark}

\end{Proposition}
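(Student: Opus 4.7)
\emph{Strategy and metric axioms.} The proof proceeds in three steps: verify that $d$ is a metric, show that its restriction to $X^+$ coincides with the Banach norm topology (the main technical ingredient), and exhibit $(X^+_\infty,d)$ as a closed subspace of a Polish space; the identification $\mathcal{C}=BC(X^+_\infty)$ will then follow immediately. The series defining $d$ converges because each summand is bounded by $\exp(-|\vc{k}|)$ and $\sum_{\vc{k}\in Z^5}\exp(-|\vc{k}|)<\infty$. Symmetry is immediate and the triangle inequality follows from the standard observation that $r\mapsto r/(1+r)$ transforms any pseudo-metric on $\mathbb{R}$ into an equivalent bounded one. For definiteness, $d(\vU,\vc{V})=0$ forces $Q_{i,\vc{k}}(\vU)=Q_{i,\vc{k}}(\vc{V})$ for every $i,\vc{k}$: either both $G$-values vanish, giving $\vU=\vc{V}=\vU_\infty$, or both are positive and $F_\vc{k}(\vU)=F_\vc{k}(\vc{V})$ for all $\vc{k}$, so completeness of $\{e_\vc{k}\}$ in $L^2(\mathbb{T}^3;R^5)\supset X$ yields $\vU=\vc{V}\in X^+$.

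\emph{Equivalence of topologies on $X^+$.} One direction is easy: if $\vU_n\to\vU$ in $X$-norm with $\vU\in X^+$, the compact embedding $X\Subset C(\mathbb{T}^3;R^5)$ delivers uniform convergence $\vr_n\to\vr$, $\vt_n\to\vt$; strict positivity of the limits together with $\vr_n^{-1}-\vr^{-1}=\vr^{-1}\vr_n^{-1}(\vr-\vr_n)$ yields $\|\vr_n^{-1}\|_C\to\|\vr^{-1}\|_C$ and likewise for $\vt$, so $G(\vU_n)\to G(\vU)$, and together with continuity of the $F_\vc{k}$ this gives $d(\vU_n,\vU)\to 0$. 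For the converse, $d$-convergence to $\vU\in X^+$ implies $G(\vU_n)\to G(\vU)>0$, hence $\|\vU_n\|_X+\|\vr_n^{-1}\|_C+\|\vt_n^{-1}\|_C$ converges, $\{\vU_n\}$ is norm-bounded in $X$, and $F_\vc{k}(\vU_n)\to F_\vc{k}(\vU)$ for every $\vc{k}$. Reflexivity of $X$ then yields $\vU_n\rightharpoonup\vU$ weakly, and compact embedding upgrades this to $\vr_n\to\vr$, $\vt_n\to\vt$ uniformly. Continuity of inversion at the positive limits gives $\|\vr_n^{-1}\|_C\to\|\vr^{-1}\|_C$ and similarly for $\vt$; subtracting from the convergent sum produces $\|\vU_n\|_X\to\|\vU\|_X$. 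Since $X=W^{1,q}\times W^{2,2}\times W^{2,2}$ with $3<q\leq 6$ is a product of uniformly convex spaces and therefore uniformly convex itself, the Radon--Riesz property upgrades weak convergence plus convergence of norms to strong convergence, so $\vU_n\to\vU$ in $X$-norm.

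\emph{Polish property and identification with $BC(X^+_\infty)$.} To exhibit a Polish structure, introduce the map
\[
\Psi : X^+_\infty \to X\times[0,\infty),\qquad \Psi(\vU)=\bigl(G(\vU)\vU,\,G(\vU)\bigr),\quad \Psi(\vU_\infty)=(0,0).
\]
Injectivity is clear and $G(\vU)\|\vU\|_X\leq \|\vU\|_X(1+\|\vU\|_X)^{-2}\leq 1/4$ shows the image is bounded. By the previous paragraph, $\Psi$ is a homeomorphism onto its image: $d$-convergence to $\vU\in X^+$ is equivalent to $X$-norm convergence, while $d$-convergence to $\vU_\infty$ is equivalent to $G(\vU_n)\to 0$, which forces $G(\vU_n)\|\vU_n\|_X\to 0$ by the same estimate. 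The image is closed in $X\times[0,\infty)$: if $(v_n,g_n)=\Psi(\vU_n)\to(v,g)$ with $g>0$, then $\vU_n=v_n/g_n\to v/g$ in $X$ with $\|\vr_n^{-1}\|_C,\|\vt_n^{-1}\|_C$ uniformly bounded, forcing $v/g\in X^+$ and $G(v/g)=g$ by continuity; if $g=0$, then the bound forces $v=0$. Hence $(X^+_\infty,d)$ is homeomorphic to a closed subset of the Polish space $X\times[0,\infty)$, so is itself Polish, separability being inherited from $X^+$. Finally, since $d$ and the norm induce the same topology on $X^+$, and $d(\vU_n,\vU_\infty)\to 0 \Leftrightarrow \|\vU_n\|_X+\|\vr_n^{-1}\|_C+\|\vt_n^{-1}\|_C\to\infty$, the two conditions defining $\mathcal{C}$ translate exactly into boundedness and $d$-continuity on $(X^+_\infty,d)$, so $\mathcal{C}=BC(X^+_\infty)$. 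The main technical hurdle throughout is the reverse implication in the topology identification, where extracting strong $X$-convergence from $d$-convergence demands the interplay of reflexivity (for a weak subsequential limit), the compact Sobolev embedding $X\Subset C(\mathbb{T}^3;R^5)$ (to pin down the $C$-norms of $\vr^{-1},\vt^{-1}$), and uniform convexity of $X$ (to promote weak convergence plus norm convergence to strong convergence).
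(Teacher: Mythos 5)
Your proof is correct, and for the metric axioms, the identification of the $d$-topology with the norm topology on $X^+$, and the characterization of convergence to $\vU_\infty$ (hence $\mathcal{C}=BC(X^+_\infty)$), it follows the same route as the paper's Steps 1--4: definiteness via $G$ and the Fourier coefficients, then $G(\vU_n)\to G(\vU)>0$ plus $F_{\vc{k}}(\vU_n)\to F_{\vc{k}}(\vU)$ giving weak convergence, the compact embedding $X\Subset C(\mathbb{T}^3;R^5)$ pinning down $\|\vr_n^{-1}\|_C$ and $\|\vt_n^{-1}\|_C$, and finally weak convergence plus norm convergence upgraded to strong convergence. On that last point you are actually more careful than the paper, which invokes only reflexivity of $X$; the correct hypothesis is the Radon--Riesz (Kadec--Klee) property, which you obtain from uniform convexity of the factors (one should add the standard remark that convergence of the sum of component norms together with weak lower semicontinuity forces each component norm to converge, so Radon--Riesz can be applied componentwise). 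Where you genuinely diverge is the Polish property: the paper proves completeness of $d$ directly by a dichotomy on Cauchy sequences (either a subsequence has $\|\vU_{n(k)}\|_X+\|\vr_{n(k)}^{-1}\|_C+\|\vt_{n(k)}^{-1}\|_C$ converging to a finite positive limit, whence convergence in $X^+$, or the sequence $d$-converges to $\vU_\infty$), while you embed $X^+_\infty$ via $\Psi(\vU)=(G(\vU)\vU,G(\vU))$ as a closed subset of $X\times[0,\infty)$. Your route buys complete metrizability and separability in one stroke and avoids the slightly delicate case analysis on Cauchy sequences; the paper's route gives the marginally stronger statement that $d$ itself is a complete metric. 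One small imprecision in your closedness argument: in the case $g=0$ the displayed bound $G(\vU)\|\vU\|_X\le 1/4$ only gives boundedness; what forces $v=0$ is the sharper estimate $\|v_n\|_X=G(\vU_n)\|\vU_n\|_X\le G(\vU_n)^{1/2}=g_n^{1/2}\to 0$, which is exactly what the quadratic exponent in the definition of $G$ is there for. This is a one-line fix, not a gap.
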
	 	

\begin{proof}
	
{\bf Step 1:}

To see $d$ is a metric, it is enough to observe that
\[
d( \vc{U}; \vc{V} ) = 0 \ \Leftrightarrow \ \vU = \vc{V}.
\]
First observe
\[
G(\vU) = 0 \ \Leftrightarrow \ \vU = 0.
\]
Thus if $\vU \ne 0$ and $d( \vc{U}; \vc{V} ) = 0$, then necessarily $\vc{V} \ne 0$, and $F_\vc{k}(\vU) = F_{\vc{k}} (\vc{V})$ for all $\vc{k}$, which yields the desired conclusion.

{\bf Step 2:}

Next, we show the equivalence
\begin{equation} \label{T3}
d(\vU_n; \vU) \to 0 \ \mbox{and}\ \vc{U} \in X^+ \ \Leftrightarrow \ \vU_n \in X^+ \ \mbox{for all}\ n \ \mbox{large enough, and}\
\vU_n \to \vU \ \mbox{in}\ X.	
	\end{equation}
Indeed
\[
G(\vU_n) \to G(\vU) =  \left( 1 +  \| \vU \|_X + \| \vr^{-1} \|_{C(\mathbb{T}^3)} +
{\| \vt^{-1} \|_{C(\mathbb{T}^3)}}  \right)^{-2} > 0
\]	
yields $\vU_n \in X^+$ for all $n$ large enough. In particular,
\[
\left( 1 +  \| \vU_n \|_X + \| \vr^{-1}_n \|_{C(\mathbb{T}^3)} + {\| \vt_n^{-1} \|_{C(\mathbb{T}^3)}}  \right)^{-2} \to \left( 1 +  \| \vU \|_X + \| \vr^{-1} \|_{C(\mathbb{T}^3)} + {\| \vt^{-1} \|_{C(\mathbb{T}^3)}}  \right)^{-2},
\]
in other words
\begin{equation} \label{T4}
\| \vU_n \|_X + \| \vr^{-1}_n \|_{C(\mathbb{T}^3)} +
{\| \vt_n^{-1} \|_{C(\mathbb{T}^3)}} \to  \| \vU \|_X + \| \vr^{-1} \|_{C(\mathbb{T}^3)}
+ {\| \vt^{-1} \|_{C(\mathbb{T}^3)}}.
\end{equation}
Thus it follows
\begin{equation} \label{T8}
	F_{\vc{k}} (\vU_n) \to F_{\vc{k}}(\vU) \ \mbox{for any}\ \vc{k} \in Z^5,
\end{equation}
which, together with \eqref{T4}, yields
\begin{equation} \label{T9}
	\vU_n \to \vU \ \mbox{weakly in}\ X.
\end{equation}
Since the phase space $X$ is compactly embedded in $C(\mathbb{T}^3; R^5)$ we deduce, using \eqref{T4},
\[
 \| \vr^{-1}_n \|_{C(\mathbb{T}^3)} \to \| \vr^{-1} \|_{C(\mathbb{T}^3)},\
 \| \vt^{-1}_n \|_{C(\mathbb{T}^3)} \to \| \vt^{-1} \|_{C(\mathbb{T}^3)};
\]
whence
\begin{equation} \label{T10}
\| \vU_n \|_X \to \| \vU \|_X.
\end{equation}
As $X$ is a reflexive Banach space, relations \eqref{T9}, \eqref{T10} yield the desired conclusion
\[
\vU_n \to \vU \ \mbox{in}\ X.
\]
The reverse implication in \eqref{T3} is trivial.

{\bf Step 3:}

We show the equivalence
\begin{align} \label{T11}
	d(\vU_n; \vU) &\to 0,\ \vU = 0 \br &\Leftrightarrow \br \mbox{for any}\ M &> 0 \ \mbox{there exists}\ n(M) \ \mbox{such that for all}\ n \geq n(M) \br \mbox{either}\
\vU_n &= 0 \ \mbox{or}\ 	\| \vU_n \|_X + \| \vr^{-1}_n \|_{C(\mathbb{T}^3)} +
{\| \vt^{-1}_n \|_{C(\mathbb{T}^3)}}
> M	
\end{align}

Indeed $d(\vU_n, 0 ) \to 0$ implies $G(\vU_n) \to 0$ yielding the left-right implication. The reverse implication is similar. 	

{\bf Step 4:}

On the one hand, given \eqref{T3}, \eqref{T11}, it is easy to show that $\mathcal{C} \subset BC(X^+_\infty)$. On the other hand, if $\mathcal{F} \in BC(X^+_\infty)$
and $\vU$, the equivalence \eqref{T3} shows that $\mathcal{F}$ is continuous at $\vU$ in the $d-$metric. Moreover, we have
\[
\mathcal{F} (\vU_n) \to \mathcal{F} (0) \ \mbox{whenever}\ d(\vU_n, 0 ) \to 0.
\]
In view \eqref{T3}, this necessarily means
\[
\lim_{\| \vU \|_X +  \| \vr^{-1} \|_{C(\mathbb{T}^3)} \to \infty } \mathcal{F}|_{X^+} (\vU) =
\mathcal{F}(0);
\]
whence $\mathcal{F} \in \mathcal{C}$.

{\bf Step 5:}

Next we show that the metric space $(X^+_\infty;d)$ is complete. To this end consider a Cauchy sequence $(\vU_n)_{n = 1}^\infty$. First suppose there is a subsequence such that
\begin{equation} \label{T12}
\vU_{n(k)} \in X^+,\
\| \vU_{n(k)} \|_X + \| \vr^{-1}_{n(k)} \|_{C(\mathbb{T}^3)}
+ {\| \vt^{-1}_{n(k)} \|_{C(\mathbb{T}^3)} } \to Y \in (0, \infty).
\end{equation}
As the space $X$ with the original topology is complete, we conclude that
\[
\vU_{n(k)} \to \vU \ \mbox{in}\ X,\ \vU \in X^+.
\]
However, as the sequence is Cauchy, we infer $\vU_n \to \vU$ in $X$.

If \eqref{T12} does not hold, we have the alternative \eqref{T11} yielding
$d(\vU_n; 0) \to 0$.

{\bf Step 6:}

As the space $X$ is separable, the corresponding dense countable set augmented by $\vU_\infty = (0,0,0)$ is dense countable in $(X^+_\infty; d)$.

	\end{proof}

\subsection{Continuity of the solution operator in time}

At this stage, we may extend the solution $\vU$ beyond the existence time $T_{\rm max}$ by setting
\begin{equation} \label{T12s}
\vU [ \vU_0; f; p] (t, \cdot) = \left\{ \begin{array}{l} \mbox{the strong solution of the NSF system it}\
t < T_{\rm max}[\vU_0; f; p] , \\  \vU_\infty = (0,0,0) \ \mbox{if}\ t \geq T_{\rm max},\ \vU_0 \in X^+, \\
\vU_\infty = (0,0,0) \ \mbox{if}\ \vU_0 = \vU_\infty.
 \end{array} \right. 	
	\end{equation}
It is easy to check that the mapping
\begin{equation} \label{T13}
	t \in [0, \infty) \mapsto \vU [ \vU_0; f; p] (t, \cdot) \in X^+_\infty
\end{equation} 	
is continuous with values in $X_\infty^+$ \emph{endowed with the metric $d$} introduced in \eqref{T2} for any admissible data.

\subsection{Continuity of the solution operator with respect to the data}

We examine the continuity of the extended operator with respect to the data $D_n$. Consider
\[
D_n \to D \ \mbox{in}\ D_{X,F,P}
\]
with the corresponding solution
\[
\vU [D_n] (t, \cdot) \ \mbox{at a specific time}\ t > 0.
\]
Suppose first that $t < T_{\rm max}[D].$ Then, in view of Theorem \ref{TST1},
\begin{equation} \label{T14}
	\vU[D_n] (t, \cdot) \to \vU [D] (t, \cdot) \ \mbox{in}\ C(\mathbb{T}^3; R^5).
	\end{equation}

\subsection{Autonomous problem and the semigroup property}
\label{APP}

Suppose that the functions $\vc{g}$ and $Q$ are independent of time. Accordingly, the data space can be written
as
\begin{align}
	D_{X,F,P} = X \times F \times P,\ X &= W^{1,q}(\mathbb{T}^3) \times  W^{2,2}(\mathbb{T}^3) \times W^{2,2}(\mathbb{T}^3; R^3), \br
	F &=  L^q(\mathbb{T}^3; R^3) \times L^q(\mathbb{T}^3),\ P = R^4
	\nonumber
\end{align}

Now, it is easy to check that the operator $\vU$ defined through \eqref{T12s} enjoys the semigroup property:
\begin{equation} \label{sep}
\vU [ \vU_0; f; p] (t + s, \cdot) = \vU \Big[ \vU[ \vU_0 ; f;p] (s, \cdot) ;f;p \Big] (t, \cdot),\ s, t \geq 0.	
	\end{equation}

\subsection{Statistical solutions}

Suppose we are given a complete Borel probability measure $\mathcal{V}$ on the data space $D_{X,F,P} = X^+ \times F \times P$.
We define the push forward measure
\begin{equation} \label{PF}
\int_{X^+_\infty} \mathcal{F}(\vU_0) \D \mathcal{V}_t (\vU_0) =
\int_{D_{X,F,P}} \mathcal{F}(\vU [D] (t, \cdot)) \ \D \mathcal{V} (D) \ \mbox{for any}\ \mathcal{F} \in \mathcal{C},  	
	\end{equation}
where $\vU$ is the extended solution operator defined by \eqref{T12s}. Note that
\begin{align} \label{PF1}
	\int_{X^+_\infty} \mathcal{F}(\vU_0) \D \mathcal{V}_t (\vU_0) &=
	\int_{D_{X,F,P}} \mathcal{F}(\vU [D] (t, \cdot)) \ \D \mathcal{V} (D) \br
	& \int_{D_{X,F,P}} \mathcal{F}(\vU [D] (t, \cdot)) \mathds{1}_{t < T_{\rm max}[D] } \D \mathcal{V} (D) +
		\mathcal{F}(0) \mathcal{V} \{ t \geq T_{\rm max} \}	.
\end{align}
Thus $(\mathcal{V}_t )_{t \geq 0}$ is a family of Borel probability measures on $X^+_\infty$ with $\mathcal{V}_0 = \Pi_{\mathcal{X}^+_\infty} \mathcal{V}$.

\begin{mdframed}[style=MyFrame]
	
	\begin{Definition}[{\bf Statistical solution}] \label{SSD1}
		
		The family of Markov operators
		\[
		\mathcal{M}_t : \mathfrak{P}[D_{X,F,P}] \to \mathfrak{P}[X^+_\infty],
		\]
		defined as
		\[
		\mathcal{M}_t (\mathcal{V} ) = \int_{X^+ \times F \times P}
		\delta_{\vU[D](t, \cdot) } \D \mathcal{V}(D),\ t \geq 0
		\]
		for any $\mathcal{V}$ supported by admissible data is called
		\emph{statistical solution}
		of the NSF system.
		
		In addition, the Markov operators $\mathcal{M}_t$  are regular with the \emph{dual operators}
		\[
		\mathcal{M}^*_t :  BC(X^+_\infty) \to \mathcal{B}(D_{X,F,P}) \ \mbox{(Borel functions on the data space)}
		\]
		\[
		\mathcal{M}^*_t : \mathcal{F} \mapsto \Big[ D \mapsto \mathcal{F}( \vU [D] (t, \cdot)) \Big].
		\]
 There holds
	\[
	\int_{X^+_\infty} \mathcal{F} \ \D \mathcal{M}_t(\mathcal{V})  = \int_{D_{X,F,T}} \mathcal{M}^*_t (\mathcal{F}) \  \D \mathcal{V},\ t \geq 0.
	\]	
	\end{Definition}
	
\end{mdframed}

\begin{Remark}
Note that  $\mathcal{M}_t(\mathcal{V})\{\mathbf{U}_\infty\} = \mathcal{V}\{t > T_{\rm max}\},$ meaning the measure of the data set 
for which the solution blows up before reaching $t$..
\end{Remark}

\begin{Remark}

	Conformally with the existing theory of \emph{stationary statistical solutions} (see e.g.
	\cite{FoRoTe1}), one can define the statistical solutions as measures on the \emph{trajectory space}
	\[
	C([0, \infty); X^+_\infty),
	\]
	\[
	\mathcal{M}_t (\mathcal{V}) = \int_{X^+ \times F \times P} \delta_{\vc{U}[D]} \ \D \mathcal{V}(D)
	\in \mathfrak{P}[C([0, \infty); X^+_\infty)].
	\]

	\end{Remark}

\subsubsection{Disintegration, autonomous problem, semigroup property}

We focus on the situation described in Section \ref{APP}, where the forcing terms $\vc{g}$ and $Q$ are independent of $t$.
First we use the Disintegration Theorem to write
\[
\int_{D_{X,F,P}} \mathcal{F}(D) \ \D \mathcal{V}(D)
= \int_{F \times P} \left( \int_X \mathcal{F} (\vU_0; f;p) \D [ \mathcal{V}|(f,p) ] (\vU_0)   \right) \D \Pi_{F \times P} \mathcal{V}(f,p)
\]
where $\mathcal{V}^{f,p} = \mathcal{V} | (f,p)$ is a family of Borel probability measures on $X$.
Given $(f,p) \in F \times P$, the push forward measure  $\mathcal{V}^{f,p}_t$ is defined on data phase space $X^+_{\infty}$ as
\begin{align} \label{SE1A}
	\int_{X^+_{\infty}} &\mathcal{F} (\vU_0) \ \D
	\mathcal{V}^{f,p}_t (\vU_0)  \br &=
	\int_X \mathcal{F} \Big(\vU [\vU_0; f; p] (t, \cdot) \Big)\mathds{1}_{ \{ t < T_{\rm max}[\vU_0; f; p] \} }
	\ \D [ \mathcal{V} | (f,p) ] (\vU_0) +
	\mathcal{F}(0) \mathcal{V} \{ t \geq T_{\rm max} \}.	
\end{align}
for any $\mathcal{F} \in \mathcal{C}$.

It follows from the semigroup property of the solution operator established in Section \ref{APP} that the exists a semigroup of Markov operators
\[
( \mathcal{M}^{f,p}_t )_{t \geq 0}: \mathfrak{P}[X^+_\infty] \to \mathfrak{P}[X^+_\infty],
\
\mathcal{V}^{f,p}_t = \mathcal{M}^{f,p}_t ( [\mathcal{V}|(f,p)] ) .
\]
Finally, we assume that the initial data and the forcing/parameter data are independent, meaning
\[
[\mathcal{V}|(f,p)] = \Pi_X \mathcal{V},\ \mathcal{V} = \Pi_X \mathcal{V} \otimes \Pi_{F \times P} \mathcal{V}.
\]
This leads to the following result:

\begin{mdframed}[style=MyFrame]

\begin{Theorem} [{\bf Autonomous problem, semigroup property}] \label{TTT1}
	
	Suppose that $\vc{g}$, $Q$ are independent of $t$ and that the measure $\mathcal{V}$ defined on the data space
	$D_{X,F,P} = X \times F \times P$  and supported by  admissible data decomposes as
\[
\mathcal{V} = \Pi_X \mathcal{V} \otimes \Pi_{F \times P} \mathcal{V}.
\]

Then the statistical solution introduced in Definition \ref{SSD1} can be written in the form
\begin{equation} \label{markov}
\mathcal{M}_t ( \mathcal{V} ) = \int_{F \times P} \mathcal{M}^{f,p}_t ( \Pi_X \mathcal{V} ) \ \D \Pi_{F \times P} \mathcal{V} (f,p) ,\ t \geq 0,
	\end{equation}
where $(\mathcal{M}^{f,p}_t)_{t \geq 0}$ is a semigroup of Markov operators:

\begin{itemize}
	\item
	\[
	 \mathcal{M}^{f,p}_t : \mathfrak{P}[X^+_\infty] \to \mathfrak{P}[X^+_\infty],
	 \]
\item
	\[
	\mathcal{M}^{f,p}_t (\delta_{\vU_0} ) = \delta_{\vU[\vU_0, f,p ](t, \cdot)}, \ t \geq 0,
	\]	
 \item
	 \[
	 \mathcal{M}^{f,p}_0 (\nu) = \nu, \ \mathcal{M}^{f,p}_{t + s}(\nu) = \mathcal{M}^{f,p}_{t} ( \mathcal{M}^{f,p}_{s}(\nu)),\ s,t \geq 0,
	 \]
	 \item
	 \[
	 \mathcal{M}^{f,p}_t \left( \sum_{{i=1}}^n \lambda_i \nu_i \right) = \sum_{{i=1}}^n \lambda_i \mathcal{M}^{f,p}_t \left( \nu_i \right),\ \lambda_i \geq 0,\ \sum_{{i=1}}^n \lambda_i = 1, \ t \geq 0,
	 \]
	\item
	\[
	t \mapsto \mathcal{M}^{f,p}_t (\nu) \ \mbox{is continuous in the narrow topology of}\  \mathfrak{P}[X^+_\infty].
	\]

	\end{itemize}

\end{Theorem}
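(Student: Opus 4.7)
The plan is to define $\mathcal{M}^{f,p}_t$ directly as a push-forward and then verify each listed property by pulling back to properties of the (extended) solution operator $\vU[\,\cdot\,;f;p](t,\cdot)$ on $X^+_\infty$ that are already in hand. Concretely, for each $(f,p) \in F \times P$ and $t \geq 0$, I would let $\Phi^{f,p}_t: X^+_\infty \to X^+_\infty$ denote the map $\vU_0 \mapsto \vU[\vU_0;f;p](t,\cdot)$ as extended by \eqref{T12s}, set $\Phi^{f,p}_t(\vU_\infty) = \vU_\infty$, and define $\mathcal{M}^{f,p}_t(\nu) = (\Phi^{f,p}_t)_{\#} \nu$. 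Borel measurability of $\Phi^{f,p}_t$ on $X^+_\infty$ (with the metric $d$ from \eqref{T2}) is the first thing to check, and this reduces to its continuity on the open set $\{T_{\rm max}[\,\cdot\,;f;p] > t\}$ via the stability result in Theorem \ref{TST1} together with the equivalence \eqref{T3}: on the complementary set the map is identically $\vU_\infty$, and the boundary is the level set where $T_{\rm max}=t$, which is measurable since $T_{\rm max}$ is lower semi-continuous by \eqref{LSC1}. The delta-formula $\mathcal{M}^{f,p}_t(\delta_{\vU_0}) = \delta_{\vU[\vU_0;f;p](t,\cdot)}$ and the linearity/affinity on convex combinations then follow immediately from the definition of push-forward.

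Next, I would establish the semigroup identity. The identity $\mathcal{M}^{f,p}_0(\nu)=\nu$ is obvious, and the composition $\mathcal{M}^{f,p}_{t+s} = \mathcal{M}^{f,p}_t \circ \mathcal{M}^{f,p}_s$ reduces to the pointwise identity $\Phi^{f,p}_{t+s} = \Phi^{f,p}_t \circ \Phi^{f,p}_s$ on $X^+_\infty$. On the set $\{T_{\rm max}[\vU_0;f;p] > t+s\}$ this is exactly the autonomous semigroup property \eqref{sep} of the solution operator. On the complementary set one has to check that the extension by $\vU_\infty$ is consistent: if $T_{\rm max}[\vU_0;f;p] \leq t+s$ and $T_{\rm max}[\vU_0;f;p] > s$, then $\Phi^{f,p}_s(\vU_0) \in X^+$ but must satisfy $T_{\rm max}[\Phi^{f,p}_s(\vU_0);f;p] \leq t$, which again follows from \eqref{sep} applied up to the blow-up time; if instead $T_{\rm max}[\vU_0;f;p] \leq s$, then $\Phi^{f,p}_s(\vU_0) = \vU_\infty$ and $\Phi^{f,p}_t(\vU_\infty) = \vU_\infty$ by definition.

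The narrow continuity $t \mapsto \mathcal{M}^{f,p}_t(\nu)$ is the part I expect to require the most care. By the characterization of $BC(X^+_\infty)$ as the class $\mathcal{C}$ from \eqref{T1}, it suffices to show that for every $\mathcal{F} \in \mathcal{C}$ and every $t_n \to t$, $\int_{X^+_\infty} \mathcal{F}\, \mathrm{d}\mathcal{M}^{f,p}_{t_n}(\nu) \to \int_{X^+_\infty} \mathcal{F} \, \mathrm{d}\mathcal{M}^{f,p}_{t}(\nu)$. Rewriting this as $\int_{X^+_\infty} \mathcal{F}(\Phi^{f,p}_{t_n}(\vU_0)) \, \mathrm{d}\nu(\vU_0)$, the integrand is uniformly bounded by $\|\mathcal{F}\|_{BC}$, so by dominated convergence it is enough to have pointwise convergence $\mathcal{F}(\Phi^{f,p}_{t_n}(\vU_0)) \to \mathcal{F}(\Phi^{f,p}_{t}(\vU_0))$ for $\nu$-a.e. $\vU_0$. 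This is precisely the time-continuity statement \eqref{T13} of the extended trajectory in the metric $d$, combined with continuity of $\mathcal{F}$ on $(X^+_\infty, d)$; the delicate point is when $t$ coincides with $T_{\rm max}[\vU_0;f;p]$, but there the defining property of $\mathcal{C}$ forces $\mathcal{F}(\Phi^{f,p}_{t_n}(\vU_0)) \to \mathcal{F}(\vU_\infty)$ as the solution becomes unbounded.

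Finally, the decomposition formula \eqref{markov} is a direct consequence of Fubini applied to the product structure $\mathcal{V} = \Pi_X\mathcal{V} \otimes \Pi_{F\times P}\mathcal{V}$ and formula \eqref{PF}: testing against $\mathcal{F} \in \mathcal{C}$,
\begin{align*}
\int_{X^+_\infty} \mathcal{F} \, \mathrm{d}\mathcal{M}_t(\mathcal{V})
&= \int_{D_{X,F,P}} \mathcal{F}(\vU[D](t,\cdot)) \, \mathrm{d}\mathcal{V}(D) \\
&= \int_{F\times P} \left( \int_X \mathcal{F}(\vU[\vU_0;f;p](t,\cdot)) \, \mathrm{d}\Pi_X\mathcal{V}(\vU_0) \right) \mathrm{d}\Pi_{F\times P}\mathcal{V}(f,p) \\
&= \int_{F\times P} \left( \int_{X^+_\infty} \mathcal{F} \, \mathrm{d}\mathcal{M}^{f,p}_t(\Pi_X\mathcal{V}) \right) \mathrm{d}\Pi_{F\times P}\mathcal{V}(f,p),
\end{align*}
where the passage to $X^+_\infty$ in the inner integral uses \eqref{SE1A}. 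The hardest technical step is the joint measurability in $(\vU_0,f,p)$ needed to invoke Fubini, which again rests on the stability result in Theorem \ref{TST1} applied to the extended operator.
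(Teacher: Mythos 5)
Your proposal is correct and follows essentially the same route as the paper: the paper obtains Theorem \ref{TTT1} from the preceding discussion by defining $\mathcal{M}^{f,p}_t$ as the push-forward of the extended solution operator, invoking the semigroup property \eqref{sep} and the time-continuity \eqref{T13}, and using disintegration (which, under the product assumption, is exactly your Fubini step) to arrive at \eqref{markov}. Your write-up merely fills in the measurability, blow-up-consistency, and dominated-convergence details that the paper leaves implicit.
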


\end{mdframed}

\section{Applications, Monte Carlo approximation of statistical solution}
\label{MC}

We consider the Monte Carlo type approximation of statistical solutions with random initial data leaving the forcing and parameters $(f,p)$ deterministic. We suppose that
$(\vr_0, \vt_0, \vu_0)$ are admissible random initial data - random variables on a probability space
$(\Omega, \mathcal{B}, \Bbb{P})$.

Consider $N$ independent identically distributed (i.i.d.) copies of the data
$(\vr_0, \vt_0, \vu_0)$ denoted
$(\vr_0^n, \vt^n_0, \vu^n_0)_{n = 1}^N$. To each data we associate the extended solution introduced in \eqref{T12s}, $(\vr^n, \vt^n, \vu^n)[\vr_0^n, \vt^n_0, \vu^n_0; f; p](t, \cdot)$ -- i.i.d. random variables
for any $t \geq 0$.

Suppose that
\[
\expe{ \int_{\mathbb{T}^3} \vr_0 \dx } < \infty,\
\expe{ \int_{\mathbb{T}^3} \left( \frac{1}{2} \vr_0 |\vu_0|^2 + c_v \vr_0 \vt_0 \right)  \ \dx}
< \infty,\  - \expe{ \int_{\mathbb{T}^3} \vr_0 \log \left( \frac{\vt^{c_v}_0}{\vr_0} \right) \dx }
< \infty,
\]
where $\mathbb{E}$ stands for expected value. In view of the {\it a priori} bounds \eqref{conM} -- \eqref{entrP},
we get
\begin{align} \label{bounds}
\expe{ \| \vr(t, \cdot) \|_{L^1(\mathbb{T}^3)} \mathds{1}_{t < T_{\rm max}[ \vr_0, \vt_0, \vu_0;f;p ] } } &< \infty,\br
\expe{ \| \vr \vu (t, \cdot) \|_{L^1(\mathbb{T}^3; R^3)} \mathds{1}_{t < T_{\rm max}[ \vr_0, \vt_0, \vu_0;f;p ] } }
&< \infty,\br  \expe{ \int_{\mathbb{T}^3} \left\| \vr \log \left( \frac{\vt^{c_v}}{\vr} \right) (t, \cdot) \right\|_{L^1(\mathbb{T}^3)} \mathds{1}_{t < T_{\rm max}[ \vr_0, \vt_0, \vu_0;f;p ] } }
&< \infty.
\end{align}

Applying the Banach space variant of Strong Law of Large Numbers, see Ledoux, Talagrand \cite[Chapter 7, Corollary 7.10]{LedTal}, we conclude
\begin{align} \label{SLLN}
\frac{1}{N}	\sum_{n=1}^N \vr^n (t, \cdot) \mathds{1}_{t < T_{\rm max}[ \vr^n_0, \vt^n_0, \vu^n_0;f;p ] }
	&\to \expe{ \vr (t, \cdot) \mathds{1}_{t < T_{\rm max}[ \vr_0, \vt_0, \vu_0;f;p ]} }
	\ \mbox{in}\ L^1(\mathbb{T}^3) \ \Bbb{P} \mbox{ a.s.}, \br
\frac{1}{N}	\sum_{n=1}^N \vr^n \vu^n (t, \cdot) \mathds{1}_{t < T_{\rm max}[ \vr^n_0, \vt^n_0, \vu^n_0;f;p ] }
&\to \expe{ \vr \vu (t, \cdot) \mathds{1}_{t < T_{\rm max}[ \vr_0, \vt_0, \vu_0;f;p ]} }
\ \mbox{in}\ L^1(\mathbb{T}^3; R^3) \ \Bbb{P} \mbox{ a.s.}, \br
\frac{1}{N}	\sum_{n=1}^N \vr^n \log \left( \frac{(\vt^n)^{c_v}}{\vr^n} \right) (t, \cdot) \mathds{1}_{t < T_{\rm max}[ \vr^n_0, \vt^n_0, \vu^n_0;f;p ] }
&\to \expe{ \vr \log \left( \frac{\vt^{c_v}}{\vr} \right) (t, \cdot)  \mathds{1}_{t < T_{\rm max}[ \vr_0, \vt_0, \vu_0;f;p ]} }
\br &\mbox{in}\ L^1(\mathbb{T}^3) \ \Bbb{P} \mbox{ a.s.}	
	\end{align}

To justify the last step in the context of the present theory, we have to observe
that the mapping
\[
\mathcal{G}: (\vr, \vt, \vu) \mapsto \left\{ \begin{array}{l} \left( \vr, \vr \vu, \vr \log \left( \frac{\vt^{c_v}}{\vr} \right)
	\right) \ \mbox{if}\ (\vr, \vt, \vu) \in X^+ ,\ \vt > 0 , \\ \\
	0 \ \mbox{if}\ (\vr, \vt, \vu) = (0,0,0)
\end{array} \right.
\]
is Borel measurable from $X^+_\infty$ to $L^1(\mathbb{T}^3) \times L^1(\mathbb{T}^3) \times L^1(\mathbb{T}^3, R^3)$.
Indeed if
\[
G_n \in BC[0, \infty), \ 0 \leq G_n \leq 1, \ G_n(Y) = Y \ \mbox{for}\ Y \leq n,
\]
then
\[
G_n \left( \| \vU \|_X + \| \vr^{-1} \|_{C(\mathbb{T}^3)} + \| \vt^{-1} \|_{C(\mathbb{T}^3)} \right) \mathcal{G} (\vU) \in BC (X^+_\infty; (L^1)^3) \to
\mathcal{G}(\vU) \ \mbox{for any}\ \vU \in X^+ .
\]

\section*{Conflict of interest}
\noindent The authors declare that they have no conflict of interest.

\section*{Data availability}
\noindent Data sharing not applicable to this article as no data sets were generated or analysed during the current study.




\def\cprime{$'$} \def\ocirc#1{\ifmmode\setbox0=\hbox{$#1$}\dimen0=\ht0
	\advance\dimen0 by1pt\rlap{\hbox to\wd0{\hss\raise\dimen0
			\hbox{\hskip.2em$\scriptscriptstyle\circ$}\hss}}#1\else {\accent"17 #1}\fi}

\end{document}